\documentclass[11pt]{article}

\usepackage[T1]{fontenc}
\usepackage[utf8]{inputenc}
\usepackage[english]{babel}

\usepackage{amsmath,amssymb,amsthm,mathtools}
\usepackage{enumitem}
\usepackage[margin=1in]{geometry}
\usepackage{hyperref}
\usepackage{csquotes}
\newcommand{\virgolette}[1]{\enquote{#1}}

\theoremstyle{definition}
\newtheorem{definition}{Definition}[section]
\newtheorem{remark}[definition]{Remark}

\theoremstyle{plain}
\newtheorem{theorem}[definition]{Theorem}

\newtheorem{lemma}[definition]{Lemma}
\newtheorem{corollary}[definition]{Corollary}
\newtheorem{claim}[definition]{Claim}

\numberwithin{equation}{section}

\title{A $wtt$-introimmune set in \texorpdfstring{$\Pi^0_1$}{Pi01} and introimmunity for several reducibilities}

\author{
Patrizio Cintioli\\
Mathematics Division, School of Sciences and Technology\\
University of Camerino, Italy\\
\texttt{patrizio.cintioli@unicam.it}
}

\date{}

\begin{document}

\maketitle

\date{}

\begin{abstract}
We prove that there exists a weak truth-table introimmune set in the class $\Pi^0_1$, settling the question left open in previous work of whether the known $\Delta^0_2$ existence result can be improved to $\Pi^0_1$.
Since $\Sigma^0_1$ sets cannot be immune, this is best possible for weak truth-table introimmunity.
We also study introimmunity for Jockusch's bounded-search reducibility $\le_{bs}$ and Andersen's Dartmouth reducibility $\le_D$, proving the existence of $\Delta^0_2$ sets that are $bs$-introimmune and $D$-introimmune; hence there also exists a $\Delta^0_2$ $D^+$-introimmune set.
We next consider the classical reducibility $\le_Q$, which is not contained in $\le_T$ on all subsets of $\omega$.
We show that no infinite $\Pi^0_1$ set is $Q$-introimmune, while a $\Delta^0_2$ $Q$-introimmune set does exist.
Thus the existence of $\Delta^0_2$ $Q$-introimmune sets is best possible within the arithmetical hierarchy.
Finally, for enumeration reducibility $\le_e$, we show that no infinite $\Pi^1_1$ set is $e$-introimmune, although $e$-introimmune sets do exist in the unrestricted sense.
The proofs combine finite-injury priority arguments with dynamic spacing methods for $\le_{wtt}$, $\le_{bs}$, and $\le_D$, a bit-by-bit finite-extension construction for $\le_Q$, and an application of Soare's abstract existence theorem in the enumeration case.
\end{abstract}

\medskip
\noindent\textbf{MSC2020:} 03D30.

\noindent\textbf{Keywords:} introimmunity, weak truth-table reducibility, Q-reducibility, enumeration reducibility, bounded-search reducibility, Dartmouth reducibility, injury priority argument.

\maketitle

\section{Introduction}

The notion of introimmunity may be viewed as a strong opposite to introreducibility.
Indeed, while a set fails to be introreducible as soon as one infinite subset does not compute it, introimmunity requires that no coinfinite subset computes the ambient set via the reducibility under consideration.
Recent work on introreducibility and related notions may be found in \cite{ghtpt21}.

The existence of sets without subsets of higher Turing degree was first established by Soare \cite{so69}.
Concerning their definability, Jockusch proved that such sets cannot be arithmetical \cite{jo73}, and Simpson later strengthened this by showing that they cannot be hyperarithmetical \cite{si78}.
Following \cite{cs03}, where this terminology was introduced in the polynomial-time setting, given a reducibility $\le_r$, an infinite set $A \subseteq \omega$ is called $r$-\emph{introimmune} if for every subset $B \subseteq A$ with $\lvert A\setminus B\rvert=\infty$ we have $A\not\le_r B$.
Whenever $\le_r$ is transitive, $r$-introimmunity ensures that $A$ has no subset of strictly higher $r$-degree.
Moreover, if $\le_r\Rightarrow\le_s$, then every $s$-introimmune set is automatically $r$-introimmune.

A classical line of research asks how low in the arithmetical hierarchy such sets can occur for reducibilities below $\le_T$.
For the many-one reducibility $\le_m$, it is known that $m$-introimmune sets exist in the class $\Pi^0_1$ \cite{ci05,ci11}.
For truth-table type reducibilities, the existence of $c$-introimmune sets in $\Delta^0_4$ was proved in \cite{cs03}, where $\le_c$ is the conjunctive reducibility, and Ambos-Spies\footnote{K.~Ambos-Spies informed the author in private correspondence that the published argument for Theorem~3 of \cite{am03} requires correction, and very kindly communicated a corrected version.} \cite{am03}
later improved this to the existence of $tt$-introimmune sets in $\Delta^0_2$.
Further structural properties of degrees of sets having no subsets of higher $m$- and $tt$-degree were investigated in \cite{ci21}.

In \cite{ci18}, we turned to the weak truth-table reducibility $\le_{wtt}$ (also called bounded Turing reducibility), proving the existence of $wtt$-introimmune sets in $\Delta^0_2$.
That paper left open whether the optimal arithmetical bound could be pushed down to $\Pi^0_1$.

A further motivation for the present work comes from the landscape of reducibilities below Turing reducibility.
Earlier investigations naturally concentrated on transitive reducibilities, since only in that setting does one obtain a corresponding degree structure and can interpret introimmunity in degree-theoretic terms.
However, the picture below $\le_T$ is subtler than the standard chain
\[
\le_1 \Rightarrow \le_m \Rightarrow \le_{tt} \Rightarrow \le_{wtt} \Rightarrow \le_T
\]
might suggest.
The literature contains natural notions which are intermediate only on restricted classes, or which fail to be transitive and therefore do not induce degrees on all subsets of $\omega$.
This makes it necessary to distinguish carefully between the degree-theoretic reading of introimmunity and the more basic reducibility-theoretic condition $A\not\le_r B$ for coinfinite subsets $B\subseteq A$.
In particular, Andersen introduced the Dartmouth reducibilities $\le_D$ and $\le_{D^+}$, showing that $\le_{D^+}$ is a transitive reducibility strictly below $\le_T$ and incomparable with $\le_{wtt}$ \cite{an08}; see also the corrigendum to \cite{ci18} in \cite{ci20corr}.
This suggests enlarging the investigation from the usual $wtt$-chain to reducibilities which are natural in this context even when they do not carry a degree structure.

Our first main result is the existence of a $wtt$-introimmune set in $\Pi^0_1$.
This gives the lowest possible arithmetical level for weak truth-table introimmunity.
The main obstacle is that the $\Delta^0_2$ construction of \cite{ci18} relied on a $K$-computable dominating function to impose in advance sufficiently large gaps between successive elements of the set, thereby isolating the bounded uses of the relevant $wtt$-computations.
In the present $\Pi^0_1$ setting the approximation is strictly subtractive, so such gaps cannot be precomputed.
To overcome this, we develop a \emph{dynamic spacing} technique: whenever the computable use of a convergent $wtt$-reduction threatens the next candidate marker, we permanently remove all intermediate elements.
Because removals in a $\Pi^0_1$ construction are irreversible, this clearing process creates the required separation once and for all and makes the diagonalization possible within a finite-injury priority argument.

Further main results concern reducibilities lying outside the usual $wtt$-chain.
In Section \ref{sec:bs_D} we consider Jockusch's bounded-search reducibility $\le_{bs}$ \cite{jo72} and Andersen's Dartmouth reducibility $\le_D$ \cite{an08}.
Unlike $wtt$-reductions, neither $bs$-reductions nor $D$-reductions come equipped with a computable a priori use bound, so a purely subtractive $\Pi^0_1$ construction appears substantially more rigid for them.
Nevertheless, by letting the halting problem dynamically compute the relevant spacing bounds, we prove the existence of $\Delta^0_2$ sets that are $bs$-introimmune and $D$-introimmune.
Since $\le_{D^+}\Rightarrow\le_D$, the latter result automatically yields a $D^+$-introimmune set as well.
This leaves open whether the $\Delta^0_2$ upper bound for $\le_{bs}$ and $\le_D$ can be lowered to $\Pi^0_1$.

Another part of the paper is devoted to the classical reducibility $\le_Q$, defined by
\[
A\le_Q B \iff (\exists \text{ total computable } f)(\forall x)\,[x\in A \iff W_{f(x)}\subseteq B].
\]
Although $\le_Q$ does not belong to the framework of reducibilities strictly below $\le_T$ on all subsets of $\omega$, it is natural to include it in a broader study of arithmetical introimmunity.
Indeed, $\le_m\Rightarrow\le_Q$, while $\le_Q$ and $\le_T$ are incomparable on $2^\omega$.
On the class of computably enumerable sets one has $\le_Q\Rightarrow\le_T$, but this is irrelevant here, since $Q$-introimmune sets must be immune.
The behavior of introimmunity for $\le_Q$ turns out to be sharply different from the reducibilities considered above:
every $\Pi^0_1$ set $A$ is $Q$-reducible to each subset $B\subseteq A$, so no infinite $\Pi^0_1$ set is $Q$-introimmune.
On the other hand, in Section \ref{sec:Q} we construct a $Q$-introimmune set in $\Delta^0_2$.
Thus the existence of $\Delta^0_2$ $Q$-introimmune sets is best possible within the arithmetical hierarchy.
Finally, in Section \ref{sec:e} we turn to enumeration reducibility $\le_e$.
Here the picture is sharply different from the arithmetical one described above.
Using the characterization of $\Pi^1_1$ sets by uniform c.e. moduli from \cite{ghtpt21}, we show that no infinite $\Pi^1_1$ set is $e$-introimmune; in particular, there are no arithmetical or hyperarithmetical $e$-introimmune sets.
On the other hand, an application of Soare's abstract theorem for countable classes of Borel measurable partial maps yields the existence of $e$-introimmune sets in the unrestricted sense \cite{so69}.
Thus enumeration reducibility admits introimmune sets, but only beyond the $\Pi^1_1$ level.

Taken together, the results of this paper yield the following picture.
For the reducibilities below $\le_T$ treated here, namely $\le_{wtt}$, $\le_{bs}$, $\le_D$, and $\le_{D^+}$, arithmetical introimmune sets do exist, and the optimal bound is obtained for $\le_{wtt}$.
The reducibility $\le_Q$ also admits arithmetical introimmune sets, with the sharp bound $\Delta^0_2$.
By contrast, $e$-introimmune sets exist only abstractly: they do exist, but no infinite $\Pi^1_1$ set is $e$-introimmune.
This considerably broadens the current existence picture for introimmune sets and highlights the descriptive diversity of the phenomenon across different reducibilities.

\paragraph*{Conventions and notation.}
Our notation and basic background from computability theory are standard. For general references, see, e.g., 
Downey--Hirschfeldt~\cite{DH2010}, Odifreddi~\cite{OdifreddiCRT, OdifreddiCRT2} and Soare~\cite{Soare1987}.

Throughout the paper, sets are identified with their characteristic functions.
We fix standard effective enumerations $\{W_e\}_{e\in\omega}$ of the computably enumerable sets, $\{\Phi_e\}_{e\in\omega}$ of oracle Turing functionals, and $\{\varphi_e\}_{e\in\omega}$ of partial computable functions.
We write $K=\emptyset'$ for the halting set, use $\langle \cdot,\cdot\rangle$ for a fixed computable pairing function, and for an infinite set $X=\{x_0<x_1<\cdots\}$ let $p_X(n)=x_n$ denote its principal function.

\section[A wtt-introimmune set in Pi01]{A $wtt$-introimmune set in $\Pi^0_1$}
\label{sec:wtt}

\subsection*{Weak truth-table reducibility}
In this section we settle the open question left in \cite{ci18} by proving that weak truth-table introimmunity already occurs at the $\Pi^0_1$ level.
The main difficulty is that, in a purely subtractive construction, the separation needed to control bounded uses cannot be fixed in advance and must instead be created dynamically during the construction.

\begin{definition}[Weak truth-table reducibility]\label{def:wtt}
Let $A,B\subseteq\omega$. We say that $A$ is \emph{weak truth-table reducible} to $B$, and write $A\le_{wtt} B$,
if there exist a Turing functional $\Phi$ and a total computable function $g$ such that for every $x$,
\[
\Phi^{B}(x)\downarrow = A(x)\;\text{and}\;
\text{every oracle query made in the computation }\Phi^{B}(x)\text{ is }\le g(x).
\]
Equivalently, $A\le_{wtt} B$ iff there exist indices $a,b$ such that $\varphi_b$ is total and, for all $x$,
$\Phi_a^{B}(x)\downarrow=A(x)$ with every oracle query in this computation $\le \varphi_b(x)$.
In this case we say that the pair $(\Phi_a,\varphi_b)$ is a $wtt$-reduction of $A$ to $B$.
\end{definition}

\begin{remark}
The reducibility $\le_{wtt}$ is also known as \emph{bounded Turing reducibility} $\le_{bT}$.
\end{remark}

We can now state and prove our main existence theorem.

\begin{theorem}\label{thm:main}
There exists a $wtt$-introimmune set in $\Pi^0_1$.
\end{theorem}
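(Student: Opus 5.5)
We will build $A=\bigcap_s A_s$ by a computable, purely subtractive construction: $A_0=\omega$, $A_{s+1}\subseteq A_s$, with $A_s$ computable uniformly in $s$, so that $A$ is $\Pi^0_1$. To make $A$ infinite we keep \emph{markers} $m_0<m_1<\cdots$, where $m_n$ is the $n$-th element of $A_s$; each marker moves only finitely often.

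For introimmunity we meet, for each triple $e=\langle a,b,c\rangle$, the requirement
\[
R_e:\ \text{if } \varphi_b \text{ is total, then } \neg\bigl[\,|A\setminus W_c|=\infty \text{ and } \Phi_a^{A\cap W_c} \text{ wtt-computes } A \text{ with bound } \varphi_b\,\bigr].
\]
This formulation is not quite right as stated, because subsets $B\subseteq A$ need not be c.e. So we will instead use the standard reduction to a uniform diagonalization. It suffices to ensure that for every pair $(a,b)$ with $\varphi_b$ total there are infinitely many (or at least sufficiently many) elements $x\in A$ such that, for every $B\subseteq A$ with $x\notin B$, the computation $\Phi_a^B(x)$ either diverges or outputs $0$ on the relevant finite part. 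Since $B$ omits infinitely many elements of $A$, this must be arranged so that any such $B$ is defeated.

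The cleaner route is the one from the $\Delta^0_2$ paper \cite{ci18}, adapted as follows. Requirement $R_{\langle a,b\rangle}$ picks a witness $x$ that is a current marker. It waits for a stage at which $\varphi_b(x)\downarrow$ and at which there is a finite oracle $\sigma\subseteq A_s\cap[0,\varphi_b(x)]$ with $\sigma(x)=0$ and $\Phi_a^\sigma(x)\downarrow$. Once it acts, it restricts $A$ on $[0,\varphi_b(x)]$ to remain compatible. Two cases then arise. If $\Phi_a^\sigma(x)=0$, we keep $x\in A$ and freeze $A\cap[0,\varphi_b(x)]$ with priority. If $\Phi_a^\sigma(x)=1$, we remove $x$ from $A$.

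The difficulty is that $B$ is arbitrary. We therefore want elements $x$ such that every $B\subseteq A$ with $x\notin B$ looks locally identical below $\varphi_b(x)$. This is where \emph{dynamic spacing} enters: once $\varphi_b(x)$ converges, we permanently remove all elements of $A_s$ in $(x,\varphi_b(x)]$. Then $A\cap[0,\varphi_b(x)]$ consists of $A\cap[0,x)$ together with $x$. Now fix $B\subseteq A$ that omits infinitely many elements of $A$, and suppose $\Phi_a^B$ wtt-reduces $A$ with bound $\varphi_b$. Since infinitely many witnesses of $R_{\langle a,b\rangle}$ exist (we make $R$ act on infinitely many markers, in a finitely-injured way), $B$ omits some such witness $x$. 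By spacing, the oracle below $\varphi_b(x)$ is $B\cap[0,x)$, which is some subset of the finite set $A\cap[0,x)$. So the plan is: when the witness $x$ is processed, we search over all $2^{|A_s\cap[0,x)|}$ finite oracles $\tau\subseteq A_s\cap[0,x)$ (with $x$ omitted) and diagonalize against all of them simultaneously. Removing $x$ kills every $\tau$ with output $1$, but the $\tau$ with output $0$ remain. To handle this, we will instead choose $x$ so that the relevant prefix is fixed, and use the pigeonhole idea. For each finite $F\subseteq A\cap[0,x_k)$, we diagonalize at a later witness $x_{k'}$ that is assigned to $F$, with witnesses allotted in a priority list of pairs $(\langle a,b\rangle,F)$. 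A given $B$ omits infinitely many elements, and the relevant omission below the witness corresponds to some pattern $F$. We diagonalize for the oracle $(B\cap[0,x))=F\cup$ (the elements of $A$ between $\max F$ and $x$ that $B$ contains). Handling this is the main obstacle, because the intermediate elements are not determined by a fixed finite $F$.

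I expect to resolve this by strengthening the spacing. Each requirement, at its witness $x$, considers all oracles $\tau$ that agree with $A_s$ outside a finite set of previously chosen witnesses. It then removes everything in $(x,\varphi_b(x)]$ and, if necessary, also the elements between the previous witness and $x$. In this way $A\cap[0,\varphi_b(x)]$ consists only of finitely many earlier witnesses plus $x$. The diagonalization then becomes finite: for each of the finitely many subsets $\tau$ of the earlier witnesses, we need $\Phi_a^{\tau}(x)\ne A(x)$ for at least one of the infinitely many witnesses assigned. Markers injured by higher-priority removals are reassigned, and each requirement acts finitely often, which is the usual finite-injury verification. This establishes Theorem~\ref{thm:main}.
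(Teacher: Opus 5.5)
Your proposal has a genuine gap. The spacing step is right: once $\varphi_b(x)$ converges you delete $A_s\cap(x,\varphi_b(x)]$, so that below the use only the pattern of $B$ on finitely many earlier elements matters. That is exactly the paper's Action~1. But the proposal stops where the real difficulty begins, and the mechanisms you offer in its place do not work.

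\begin{itemize}
\item The inference ``infinitely many witnesses exist, so $B$ omits some witness'' is false. A coinfinite $B\subseteq A$ can omit infinitely many elements of $A$ and still contain every witness, unless the witnesses for a given $(a,b)$ are cofinitely many elements of $A$. Reserving witnesses for pairs $(\langle a,b\rangle,F)$ makes this worse, since $B$ need not omit the witness reserved for its own pattern.
\item Your final criterion (for each $\tau$, $\Phi_a^{\tau}(x)\neq A(x)$ at some witness $x$) is ill-posed and insufficient. The set of ``earlier witnesses'' changes with $x$. Moreover, a disagreement obtained while $x$ stays in $A$ defeats only those $B$ that actually omit that particular $x$.
\item A requirement that acts on infinitely many markers acts infinitely often. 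So ``each requirement acts finitely often'' needs a splitting into subrequirements, which you never specify.
\end{itemize}

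The missing idea is to iterate at a \emph{fixed position} rather than move to new witnesses.

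\begin{itemize}
\item For each pair $(n,e)$ with $e=\langle a,b\rangle\le n$, keep a finite list $D_{n,e}\subseteq\{0,1\}^n$ of defeated patterns on $m_0,\dots,m_{n-1}$. Reset it whenever one of these markers moves.
\item Suppose some $\alpha\notin D_{n,e}$ makes $\Phi_a$ output $1$ at the current $m_n$. Here the oracle is $X_\alpha$, equal to $\alpha$ on $m_0,\dots,m_{n-1}$ and $0$ elsewhere, with use inside the cleared interval. The paper asks for output $1$ at all $m_k$, $k\le n$. Then remove $m_n$ and add $\alpha$ to $D_{n,e}$.
\item Such a removal defeats $\alpha$ globally. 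Below the use, $A$ now contains nothing except $m_0,\dots,m_{n-1}$. Hence every $B\subseteq A$ with pattern $\alpha$ agrees with $X_\alpha$ there and outputs $1$ at an element no longer in $A$, whatever $B$ does elsewhere.
\end{itemize}

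This is exactly how your dichotomy (``removing $x$ kills the $\tau$ with output $1$, but those with output $0$ remain'') is resolved. The survivors are tested against the next occupant of the \emph{same} position, with the same preceding markers.

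Once $m_0,\dots,m_{n-1}$ settle, the $n$-th marker moves at most $n+(n+1)2^n$ times. This counts at most $2^n$ removals for each $e\le n$, plus at most one gap-clearing for each $e<n$. Finite injury then follows by induction on $n$.

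Every position $n\ge e$ serves $N_e$. So a coinfinite $B\subseteq A$ omits some final $m_n$ with $n\ge e$. Its pattern on $m_0,\dots,m_{n-1}$ is either in the final $D_{n,e}$, or it is not:
\begin{itemize}
\item If it is, $B$ errs at the last removed occupant of position $n$. At that stage the earlier markers were already final and the interval above the occupant was empty.
\item If it is not, $\Phi_a^B(m_n)\neq 1=A(m_n)$, since otherwise $(n,e)$ would act again and move $m_n$.
\end{itemize}
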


We construct a set $A$ satisfying the following requirements for every $e=\langle a,b\rangle$:
\begin{displaymath}
N_{e}: (\Phi_a,\varphi_b)\text{ does not $wtt$-reduce } A \text{ to any } X\subseteq A \text{ with } \lvert A\setminus X\rvert=\infty.
\end{displaymath}
We do not need explicit positive requirements because the finite injury nature of our subtractive construction intrinsically guarantees that $A$ is infinite.

\paragraph*{Strategy and Construction}
The set $A$ is constructed by stages $s \ge 0$. At each stage $s$ we define a cofinite set $A_s$, and the final set will be
\begin{displaymath}
A = \bigcap_{s \ge 0} A_s.
\end{displaymath}
Since $A_{s+1} \subseteq A_s$ for every $s \ge 0$ and the sequence $(A_s)_{s\ge 0}$ is uniformly computable, $A$ is a $\Pi^0_1$ set. At any stage $s$, the elements of $A_s$ are strictly ordered as
\begin{displaymath}
A_s = \{m_0^s < m_1^s < m_2^s < \dots \}.
\end{displaymath}
To satisfy the requirements, for each pair $(n,e)$ with $e \le n$ we maintain a finite set of strings $D_{n,e}^s \subseteq \{0,1\}^n$, representing the \virgolette{defeated} oracle guesses.

\vskip.2cm
{\it Algorithm}
\begin{description}
\item - Stage $0$. Set $A_0 = \omega$ (so $m_n^0 = n$ for all $n \ge 0$). Set $D_{n,e}^0 = \emptyset$ for all $e \le n$.
\item - Stage $s+1$. Let $A_s$ be the set constructed by the end of stage $s$. A pair $(n,e)$ with $e \le n \le s$ and $e = \langle a, b \rangle$ {\it is eligible to act} at stage $s+1$ if $\varphi_{b,s}(m_k^s) \downarrow$ for all $k \le n$.
Let $u=\max_{k\le n}\varphi_{b,s}(m_k^s)$ for this pair.
The pair requires attention if one of the following two conditions holds:
\begin{description}
\item - \textbf{C1}: $A_s \cap (m_n^s, u] \neq \emptyset$ (which is equivalent to $m_{n+1}^s \le u$).
\item - \textbf{C2}: Condition \textbf{C1} does not hold, and there exists a string $\alpha \in \{0,1\}^n \setminus D_{n,e}^s$ such that, letting $X_{\alpha,s}\subseteq\omega$ be the \emph{total} set defined by
\[
X_{\alpha,s}(y)=
\begin{cases}
\alpha(k) & \text{if } y=m_k^s \text{ for some } k<n,\\
0 & \text{otherwise,}
\end{cases}
\]
we have $\Phi_{a,s}^{X_{\alpha,s}}(m_k^s)\downarrow=1$ for all $k\le n$, and every oracle query in these computations is $\le u$
(hence the outcome depends only on $X_{\alpha,s}\!\upharpoonright\!(u+1)$).
\end{description}
Find the lexicographically least pair $(n,e)$ with $e \le n \le s$ that requires attention. If no such pair exists, set $A_{s+1} = A_s$, $D_{j,i}^{s+1} = D_{j,i}^s$ for all $j,i$, and go to the next stage.
Otherwise, act according to the condition satisfied by $(n,e)$:
\begin{description}
\item - \textbf{Action 1}: If $(n,e)$ requires attention via \textbf{C1}, set $A_{s+1} = A_s \setminus (m_n^s, u]$. Set $D_{n,e}^{s+1} = D_{n,e}^s$. Let $y_{\min} = m_{n+1}^s$ be the minimum element extracted.
\item - \textbf{Action 2}: If $(n,e)$ requires attention via \textbf{C2} with some string $\alpha$ (pick the lexicographically least if there are many), set $A_{s+1} = A_s \setminus \{ m_n^s \}$. Set $D_{n,e}^{s+1} = D_{n,e}^s \cup \{\alpha\}$. Let $y_{\min} = m_n^s$ be the minimum element extracted.
\end{description}

{\it Reset Rule}: For all pairs $(j,i)$ with $j \ge 1$ and $i\le j$ such that $m_{j-1}^s \ge y_{\min}$, set $D_{j,i}^{s+1} = \emptyset$. For all other pairs $(j,i)$, if it is not the pair explicitly updated by Action 2, carry over $D_{j,i}^{s+1} = D_{j,i}^s$.
\end{description}
{\it End of algorithm}

\begin{lemma}\label{lem:marker-stabilizes}
Every marker $m_n^s$ stabilizes, i.e., $\lim_{s \rightarrow \infty} m_n^s = m_n < \infty$. Consequently, $A$ is infinite.
\end{lemma}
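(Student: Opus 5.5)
The plan is to argue by induction on $n$, proving a statement stronger than stabilization alone:
\[
(\ast)_n:\quad \text{the markers } m_0^s,\dots,m_n^s \text{ are eventually constant, and every pair } (j,e) \text{ with } e\le j\le n \text{ acts only finitely often.}
\]
The first step is to record precisely which actions can move a given marker. Since every $A_s$ is cofinite, all markers $m_k^s$ are defined at every stage. The marker $m_n^s$ changes at stage $s+1$ exactly when some element $\le m_n^s$ is removed, that is, when $y_{\min}\le m_n^s$.

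\begin{itemize}
\item \textbf{Action 1} by a pair $(j,e)$ removes only elements of $(m_j^s,u]$, so $y_{\min}=m_{j+1}^s$. It can therefore move $m_n^s$ only if $j<n$.
\item \textbf{Action 2} by $(j,e)$ removes $m_j^s$. It can move $m_n^s$ only if $j\le n$.
\end{itemize}

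Hence only the finitely many pairs $(j,e)$ with $e\le j\le n$ can ever change $m_n^s$.

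For the inductive step, assume $(\ast)_{n-1}$, which is vacuous when $n=0$. Fix a stage $s_0$ after which $m_0,\dots,m_{n-1}$ are fixed and no pair $(j,e)$ with $j<n$ acts. After $s_0$, the marker $m_n^s$ can change only through Action 2 of some pair $(n,e')$ with $e'\le n$.

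The key observation concerns the sets $D_{n,e'}$. By the Reset Rule, $D_{n,e'}$ is emptied only when $y_{\min}\le m_{n-1}^s$, and only actions of pairs with $j<n$ produce such a $y_{\min}$:
\begin{itemize}
\item actions of pairs $(n,\cdot)$ give $y_{\min}\in\{m_n^s,m_{n+1}^s\}$, both $>m_{n-1}^s$;
\item actions of pairs with $j>n$ give $y_{\min}>m_n^s$.
\end{itemize}
So after $s_0$ each $D_{n,e'}$ never shrinks. Each Action 2 by $(n,e')$ adds a new string of $\{0,1\}^n$ to $D_{n,e'}$, because C2 requires $\alpha\notin D_{n,e'}^s$. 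Therefore there are at most $(n+1)2^n$ Actions 2 at level $n$ after $s_0$, and $m_n^s$ stabilizes.

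It remains to show that each $(n,e)$ performs Action 1 only finitely often. Take a stage $s_1$ after which $m_0,\dots,m_n$ are all fixed. For a given $(n,e)$, the bound $u=\max_{k\le n}\varphi_{b}(m_k)$ is then a fixed number, so eligibility and $u$ no longer vary. After one Action 1 by $(n,e)$ we have $A_s\cap(m_n,u]=\emptyset$. This intersection stays empty forever, since $A_s$ only shrinks and $m_n$ no longer moves. Hence C1 never holds again for $(n,e)$, and $(n,e)$ performs at most one further Action 1. This establishes $(\ast)_n$.

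Finally, $A$ is infinite. The limits $m_n$ are pairwise distinct, being strictly increasing in $n$. Each $m_n$ lies in $A_s$ for all sufficiently large $s$, and because the sequence $A_s$ is decreasing, $m_n$ lies in every $A_s$, hence in $A$.

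The main obstacle I expect is the bookkeeping in the Reset Rule: checking that no action at level $\ge n$ ever resets $D_{n,\cdot}$. This relies on the inequality $m_{n-1}^s<y_{\min}$ in every such case, and on the fact that markers are indexed relative to the current $A_s$. That is exactly why the induction must carry the ``acts finitely often'' clause alongside stabilization.
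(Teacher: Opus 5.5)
Your proof is correct and follows essentially the paper's argument. It proceeds by induction on $n$, uses the fact that $D_{n,e}$ is never reset once $m_0,\dots,m_{n-1}$ are fixed (so Action~2 at level $n$ fires at most $(n+1)2^n$ more times), and uses the fact that Action~1 of a pair can fire at most once after its markers and use bound $u$ are fixed. The only difference is bookkeeping: the paper inducts on stabilization alone and counts directly the at most $n$ Action~1 moves of level-$(n-1)$ pairs that can still remove $m_n^s$, whereas you put the ``acts finitely often'' clause into the hypothesis to exclude them. That clause is therefore a convenience, not the necessity your closing remark claims.
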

\begin{proof}
We proceed by induction on $n$. For $n=0$ the claim is immediate: the only relevant pair is $(0,0)$ and Action 2 can occur at most once since $\{0,1\}^0$ has size 1. Hence $m^s_0$ stabilizes.
Now assume $n \ge 1$ and proceed by induction. Let $s_0$ be a stage such that the previous marker $m_{n-1}^s$ has reached its final stable value $m_{n-1}$.
This implies that for all $s \ge s_0$, no element less than or equal to $m_{n-1}$ is ever extracted from $A_s$. Therefore, for any extraction at stage $s+1 > s_0$, the minimum extracted element $y_{\min}$ is strictly greater than $m_{n-1}$. By the Reset Rule, since $m_{n-1} < y_{\min}$, the set $D_{n,e}$ is never reset to $\emptyset$ after stage $s_0$ for any $e \le n$.

We count how many times $m_n^s$ can change (i.e., increase) after $s_0$. The value $m_n^s$ increases if and only if an element $y \le m_n^s$ is extracted. Since $y \ge y_{\min} > m_{n-1}$ (and $A_s$ contains no elements strictly between $m_{n-1}$ and $m_n^s$), the minimum element extracted by any action affecting $m_n^s$ must be exactly $m_n^s$. Let us check which actions have $y_{\min} = m_n^s$:
\begin{description}
\item - Action 1 for a pair $(k,e)$ extracts elements with minimum $m_{k+1}^s$. This equals $m_n^s$ if and only if $k = n-1$. Since $m_0, \dots, m_{n-1}$ are stable, the combined use $u = \max_{j \le n-1} \varphi_{b,s}(m_j)$ can transition from divergent to convergent at most once (because $\varphi_b$ is a standard partial computable function, and once it halts on stable inputs, its value is fixed permanently). Thus Action 1 for $(n-1,e)$ clears the interval at most once for each $e \le n-1$, after which Condition \textbf{C1} never holds again for $(n-1,e)$.
\item - Action 2 for a pair $(k,e)$ extracts $m_k^s$. This equals $m_n^s$ if and only if $k = n$. When Action 2 for $(n,e)$ is executed, a string $\alpha \in \{0,1\}^n$ is permanently added to $D_{n,e}$. Since $D_{n,e}$ is never reset after $s_0$, and there are exactly $2^n$ strings of length $n$, this action can occur at most $2^n$ times for each $e \le n$.
\item - Any other action (Action 1 for $k \ge n$, Action 2 for $k > n$) has a minimum extracted element strictly greater than $m_n^s$, leaving $m_n^s$ unchanged.
\item - Action 2 for $k < n$ would extract $m_k \le m_{n-1}$, which contradicts the stability of $m_{n-1}$, so it never happens after $s_0$.
\item - Similarly, Action 1 for $k<n-1$ would extract elements with minimum $m^s_{k+1}\le m_{n-1}$, contradicting the stability of $m_{n-1}$, so it never happens after $s_0$.
\end{description}
Therefore, after $s_0$, the marker $m_n^s$ is extracted at most $n + (n+1)2^n$ times. Hence $\lim_{s \rightarrow \infty} m_n^s$ exists and is finite.
\end{proof}

\begin{lemma}\label{lem:wtt-req}
For every $e$, $N_e$ is satisfied. Thus $A$ is $wtt$-introimmune.
\end{lemma}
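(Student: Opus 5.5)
We argue by contradiction. Fix $e=\langle a,b\rangle$ and suppose that $(\Phi_a,\varphi_b)$ is a $wtt$-reduction of $A$ to some $X\subseteq A$ with $\lvert A\setminus X\rvert=\infty$. In particular $\varphi_b$ is total. By Lemma \ref{lem:marker-stabilizes} the markers $m_k$ stabilize and $A=\{m_0<m_1<\cdots\}$. Since $A\setminus X$ is infinite, we choose $n\ge e$ with $m_n\notin X$. We then fix a stage $s_0>n$ after which three things hold: $m_0,\dots,m_{n+1}$ have reached their final values, and $\varphi_{b,s}(m_k)\downarrow$ for all $k\le n$. Let $u=\max_{k\le n}\varphi_b(m_k)$. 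The plan has three steps: show that no pair of higher priority than $(n,e)$ acts after $s_0$; deduce that \textbf{C1} fails for $(n,e)$; and finally show that the string $\alpha$ read off from $X$ would force an action of $(n,e)$, either now or in the past, that contradicts the reduction.

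\emph{Step 1 (no higher-priority action after $s_0$).} Consider a pair $(n',e')$ lexicographically below $(n,e)$.
\begin{itemize}
\item If $n'<n$, then Action 1 for $(n',e')$ extracts $m_{n'+1}\le m_n$, and Action 2 extracts $m_{n'}$. Either would move a stable marker.
\item If $n'=n$, then Action 1 extracts $m_{n+1}$ and Action 2 extracts $m_n$. Again either would move a stable marker.
\end{itemize}
So after $s_0$ none of these pairs acts. Moreover, every extraction after $s_0$ has $y_{\min}>m_{n+1}>m_{n-1}$, so by the Reset Rule $D_{n,e}$ is never reset after $s_0$. Since $D_{n,e}$ can only grow and is finite, it eventually stabilizes.

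\emph{Step 2 (\textbf{C1} fails).} Suppose \textbf{C1} held for $(n,e)$ at some stage after $s_0$. Then $(n,e)$ would require attention, and by Step 1 it would be the least pair requiring attention. It would then act and remove $m_{n+1}$, contradicting stability. Hence $m_{n+1}>u$, and therefore $A\cap[0,u]=\{m_0,\dots,m_n\}$.

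\emph{Step 3 (the oracle string).} Let $\alpha\in\{0,1\}^n$ be given by $\alpha(k)=X(m_k)$ for $k<n$. Because $X\subseteq A$ and $m_n\notin X$, we get $X\!\upharpoonright\!(u+1)=X_{\alpha,s}\!\upharpoonright\!(u+1)$ for all $s\ge s_0$. The reduction gives $\Phi_a^X(m_k)=A(m_k)=1$ for all $k\le n$, with all queries $\le\varphi_b(m_k)\le u$. So for large $s$ the computations $\Phi_{a,s}^{X_{\alpha,s}}(m_k)\downarrow=1$ converge with queries $\le u$. There are two cases.

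\emph{Case $\alpha\notin D_{n,e}$.} Then \textbf{C2} eventually holds for $(n,e)$, so $(n,e)$ acts by Action 2 (using $\alpha$ or a lexicographically smaller string) and removes $m_n$. This is impossible by Step 1 and stability of $m_n$.

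\emph{Case $\alpha\in D_{n,e}$.} This is the main obstacle, since $\alpha$ may have been added at an earlier stage $t+1$ with different markers. Take the last such addition. Then $D_{n,e}$ was never reset after $t+1$, which means no element $\le m_{n-1}^{t}$ was extracted after $t$. Hence $m_k^t=m_k$ for all $k<n$. At stage $t+1$ the following held:
\begin{itemize}
\item \textbf{C1} failed, so $A_t\cap(m_n^t,u_t]=\emptyset$;
\item $\Phi_{a}^{X_{\alpha,t}}(m_n^t)=1$, with queries $\le u_t$;
\item $m_n^t$ was then removed, so $m_n^t\notin A$.
\end{itemize}
It follows that $A\cap[0,u_t]\subseteq\{m_0,\dots,m_{n-1}\}$, and so $X\!\upharpoonright\!(u_t+1)=X_{\alpha,t}\!\upharpoonright\!(u_t+1)$. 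Since $u_t\ge\varphi_b(m_n^t)$, the genuine computation $\Phi_a^X(m_n^t)$ makes the same queries with the same answers, so it outputs $1$. But $A(m_n^t)=0$, contradicting the reduction.

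Both cases lead to a contradiction, so $N_e$ holds. Together with Lemma \ref{lem:marker-stabilizes}, which shows that $A$ is infinite, this proves that $A$ is $wtt$-introimmune.
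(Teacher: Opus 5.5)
Your proof is correct and follows essentially the same route as the paper's. You choose $n\ge e$ with $m_n\notin X$, use the stability of $m_0,\dots,m_{n+1}$ to rule out higher-priority action and \textbf{C1}, and then split on whether $\alpha$ lies in $D_{n,e}$; in the second case you use the last stage $t+1$ at which $\alpha$ was added and the agreement of $X$ with $X_{\alpha,t}$ up to $u_t$. The differences are cosmetic: you split on the eventual value of $D_{n,e}$ instead of on $D_{n,e}^{s_2}$ at a fixed stage, and in Step 2 you should write $A\cap[0,u]\subseteq\{m_0,\dots,m_n\}$, since $u$ may be smaller than $m_n$; the inclusion is all the argument needs.
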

\begin{proof}
Let $e = \langle a, b\rangle$. If $\varphi_b$ is not a total computable function, then by definition the pair $(\Phi_a, \varphi_b)$ cannot be a valid $wtt$-reduction. In this case, the requirement $N_e$ is vacuously satisfied. Therefore, from now on we may assume that $\varphi_b$ is total.

For the sake of contradiction, assume that the requirement $N_e$ is not satisfied. This implies that there exists an $X \subseteq A$ such that $\lvert A \setminus X\rvert = \infty$ and $\Phi_a^X = A$ with the queries bounded by the total computable function $\varphi_b$.
Since $X$ omits infinitely many elements of $A$, and $A = \{m_0 < m_1 < m_2 < \dots \}$ is infinite by Lemma \ref{lem:marker-stabilizes}, there exists an integer $n \ge e$ such that $X(m_n) = 0$, where $m_n = \lim_{s \rightarrow \infty} m_n^s$.
Let $s_0$ be a sufficiently large stage such that for all $k \le n+1$ and $s \ge s_0$, $m_k^s = m_k$.
Since we have established that $\varphi_b$ is total, the computation of $\varphi_b(m_k)$ must eventually converge for all $k \le n$. Let $u = \max_{k \le n} \varphi_b(m_k)$.
Let $s_1 \ge s_0$ be a stage such that $\max_{k \le n}\varphi_{b, s_1}(m_k) \downarrow = u$.

Since all markers up to $m_{n+1}$ are stable from stage $s_0$, no pair lexicographically strictly less than $(n,e)$ can require attention at any stage $s \ge s_1$, because any such pair would act by extracting an element $\le m_{n+1}$, contradicting its stability.
Hence, if \textbf{C1} held for $(n,e)$ at some stage $s \ge s_1$, then $(n,e)$ would be the highest priority pair requiring attention at that stage and would act via Action~1, extracting $m_{n+1}$ and contradicting the stability of $m_{n+1}$.
Therefore \textbf{C1} is false for all $s \ge s_1$, so the interval $(m_n,u]$ is completely empty in $A$ and hence empty in $X \subseteq A$.

Let $\alpha \in \{0,1\}^n$ be the string defined by $\alpha(k) = X(m_k)$ for every $k < n$.
Define the (total) set $X_{\alpha}\subseteq\omega$ by $X_\alpha(m_k)=\alpha(k)$ for $k<n$ and $X_\alpha(y)=0$ for all other $y$.
(At stage $s$ we only use its restriction to $[0,u_s]$, where $u_s=\max_{k\le n}\varphi_{b,s}(m_k^s)$.)
We show that $X_{\alpha}$ matches the true oracle $X$ up to $u$:
\begin{description}
\item - For $y = m_k$ with $k < n$: $X_{\alpha}(y) = \alpha(k) = X(y)$.
\item - For $y = m_n$: $X_{\alpha}(y) = 0$ (by definition) and $X(y) = 0$ (by the choice of $n$).
\item - For all other $y \le u$: $y \notin \{m_0, \dots, m_n\}$, hence either $y<m_0$, or $m_{k-1}<y<m_k$ for some $1\le k\le n$, or $y\in (m_n,u]$.
In all cases $y\notin A$, hence $y\notin X$, so $X(y)=0=X_\alpha(y)$.
\end{description}
Therefore, $X$ exactly coincides with $X_{\alpha}$ up to the use bound $u$.
Since we are assuming $\Phi_a^X=A$ and $m_k\in A$ for all $k\le n$, the true computation $\Phi_a^X(m_k)$ must output 1.
Since the true computation $\Phi_a^X(m_k)$ only queries elements $\le \varphi_b(m_k)\le u$ and $X$ agrees with $X_{\alpha}$ up to $u$, the simulated computation
$\Phi_a^{X_{\alpha}}(m_k)$ is identical. Thus, it also outputs 1 and only queries elements $\le u$, fulfilling the condition \textbf{C2}.
Hence, the computation predicate in \textbf{C2} is satisfied via $\alpha$ for $(n,e)$ for all sufficiently large stages $s \ge s_1$.

Fix a stage $s_2 \ge s_1$ large enough that the computation predicate in \textbf{C2} is witnessed at stage $s_2$ via $\alpha$.
Then, if $\alpha \notin D_{n,e}^{s_2}$, the pair $(n,e)$ would act via Action~2 at stage $s_2+1$, contradicting the stability of $m_n$.
Hence, $\alpha \in D_{n,e}^{s_2}$.

Let $t < s_2$ be such that $\alpha \in D_{n,e}^{t+1}\setminus D_{n,e}^t$ and $t$ is maximal with this property.
Then stage $t+1$ executed Action~2 for $(n,e)$ via $\alpha$, removing
\[
x := m_n^t
\]
from $A_{t+1}$.

Since $\alpha \in D_{n,e}^{s_2}$ and $t$ was chosen maximal, $D_{n,e}$ is not reset at any stage between $t+1$ and $s_2$.
Hence no stage between $t+1$ and $s_2$ extracts an element $\le m_{n-1}^t$, so the markers $m_0^t,\dots,m_{n-1}^t$ are already their final values
$m_0,\dots,m_{n-1}$.
Therefore, $\alpha$ correctly codes $X$ on these elements.

Because $x$ is removed at stage $t+1$, we have $x\notin A$, so $A(x)=0$, and also $X(x)=0$ since $X\subseteq A$.
Moreover, since Action~2 applies at stage $t+1$, condition \textbf{C1} evaluated on $A_t$ must be false.
Let
\[
u_t := \max_{k\le n}\varphi_{b,t}(m_k^t).
\]
Then $A_t\cap (x,u_t]=\emptyset$. Furthermore, by the definition of the markers, $A_t$ contains no elements strictly between $m_{k-1}^t$ and $m_k^t$ for any $k\le n$. Since $X\subseteq A\subseteq A_t$, it follows that $X$ has no elements in $(x,u_t]$ nor strictly between any markers up to $x$.
Thus, $X$ exactly agrees with $X_\alpha$ up to $u_t$.
Finally, since \textbf{C2} holds at stage $t+1$ via $\alpha$, we have $\Phi_{a,t}^{X_\alpha}(x)\downarrow=1$.
(Notice that since $m_k^t=m_k$ for $k<n$, the evaluated set $X_{\alpha,t}$ exactly coincides with the $X_{\alpha}$ defined above, so we have $\Phi_{a,t}^{X_{\alpha}}(x)\downarrow =1$).
Since the use of $\Phi_a^X(x)$ is $\le\varphi_b(x)\le u_t$ and $X$ agrees with $X_\alpha$ below $u_t$,
it follows that $\Phi_a^X(x)=\Phi_a^{X_\alpha}(x)=1$, contradicting $A(x)=0$.
\end{proof}

\begin{proof}[Proof of Theorem \ref{thm:main}]
By Lemma~\ref{lem:marker-stabilizes}, the constructed set $A$ is infinite.
By Lemma~\ref{lem:wtt-req}, every requirement $N_e$ is satisfied.
Hence $A$ is $wtt$-introimmune.
\end{proof}

\section{Introimmunity for bounded-search and Dartmouth reducibilities in \texorpdfstring{$\Delta^0_2$}{Delta02}}
\label{sec:bs_D}

We now turn to two further reducibilities strictly below $\le_T$, namely Jockusch's bounded-search reducibility and Andersen's Dartmouth reducibility.
Since neither of them comes with a computable a priori bound on the relevant oracle use, the $\Pi^0_1$ strategy from the weak truth-table case must be replaced by a $\Delta^0_2$ construction in which the necessary spacing is computed dynamically from the halting problem.

\begin{definition}[Bounded-search and Dartmouth reducibilities]
Let $A, B \subseteq \omega$. For each index $e$, and input $n$, if $\Phi_e^B(n)\downarrow$, let $Q_e^B(n)$ denote the finite set of all numbers $u$ such that the membership or non-membership of $u$ in $B$ is used in the computation of $\Phi_e^B(n)$.
\begin{enumerate}[label=(\roman*)]
\item $A \le_{bs} B$ if there is a Turing functional $\Phi_a$ and a total computable function $\varphi_b$ such that $\Phi_a^B = A$ and
\[
\lvert Q_a^B(x)\rvert \le \varphi_b(x)
\qquad \text{for all } x.
\]
\item $A \le_D B$ if there is a Turing functional $\Phi_a$ such that $\Phi_a^B = A$ and $\Phi_a^W$ is a total function for every computably enumerable set $W$.
\item $A \le_{D^+} B$ if $A \le_D B$ and the reduction $\Phi_a$ is a positive procedure.
Of course $\le_{D^+}\Rightarrow \le_D$, and as shown by Andersen \cite{an08}, $\le_{D} \not\Rightarrow \le_{D^+}$.
Intuitively, a positive procedure uses only positive information from the oracle to certify output $1$, and only negative information to certify output $0$.
\end{enumerate}
\end{definition}
\smallskip

\begin{remark}
We may, and do, fix once and for all an effective enumeration of oracle Turing functionals that never repeat an oracle query. Indeed, from any oracle functional one can uniformly obtain an equivalent one that stores previously obtained oracle answers and reuses them whenever the same query is requested again. Hence, for functionals in this normalized enumeration, the total number of oracle queries made in the computation of $\Phi_a^B(x)$ is exactly $\lvert Q_a^B(x)\rvert$. Accordingly, in the proof below, whenever we say that a computation makes at most $\varphi_b(x)$ oracle queries, we are simply using the bound $\lvert Q_a^B(x)\rvert \le \varphi_b(x)$ for the normalized functional.
\end{remark}
Unlike $wtt$-reductions, the uses of $bs$-reductions and $D$-reductions are not bounded by an a priori computable function. This makes a purely subtractive $\Pi^0_1$ construction highly problematic. However, by using the halting set $K \equiv_T \emptyset'$ as an oracle, we can dynamically construct the topological spacing required for the diagonalization using a finite extension argument, obtaining $\{bs, D\}$-introimmune sets in $\Delta^0_2$.

\begin{theorem}\label{thm:bs_delta2}
There exists a set $A \in \Delta^0_2$ which is $bs$-introimmune. 
\end{theorem}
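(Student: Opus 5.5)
We build $A$ by a $K$-computable finite-extension construction. At stage $s$ we fix a finite initial segment $\sigma_s$, a finite characteristic string whose last element is a $1$; the set $A$ is the union of these. The requirements are, for $e=\langle a,b\rangle$,
\[
N_e:\ \neg\bigl(\exists X\subseteq A\bigr)\bigl[\,\lvert A\setminus X\rvert=\infty \ \wedge\ \Phi_a^X=A \ \wedge\ (\forall x)\,\lvert Q_a^X(x)\rvert\le\varphi_b(x)\,\bigr].
\]
Infinitude of $A$ is automatic if each stage adds at least one new element.

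The key observation replaces the computable use bound of the $wtt$ case. Let the current elements be $m_0<\dots<m_{n-1}$, and suppose $\varphi_b$ is total. We look for a fresh element $x>\lvert\sigma_s\rvert$ and a set $X\subseteq\{m_0,\dots,m_{n-1},x\}$ with $x\notin X$ such that $\Phi_a^X(x)\downarrow=1$. This search is $\Sigma^0_1$ in the data, hence decidable by $K$.

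We do not even need the query-count bound to find such a pair. Whenever $\Phi_a^X(x)$ converges, its computation has a finite use $u$, and $K$ can compute $u$.

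There are two cases.

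\textbf{Case (i).} Some $x$ and some $X$ (chosen among the finitely many subsets of the current elements, with $x\notin X$) satisfy $\Phi_a^X(x)\downarrow=1$, with use $u$. Then $K$ finds them. We extend $\sigma$ to put $x$ into $A$ and to keep $A\cap[\text{previous length},u]=\{x\}$. Later elements are all placed above $u$.

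After this, any $X'\subseteq A$ that agrees with $X$ on the old elements and omits $x$ agrees with $X$ below $u$. So $\Phi_a^{X'}(x)=1=A(x)$ causes no contradiction yet. We diagonalize differently: we enlarge the old elements' role. Concretely, we want to force $\Phi_a^{X'}(y)\ne A(y)$ for some $y$, so we use $x$ as a \emph{decoy}. Choose the search instead with $\Phi_a^X(x)\downarrow=1$ for $x$ excluded from $A$. That is, $x$ is a fresh number that we keep \emph{out} of $A$, we ensure $A\cap(\lvert\sigma_s\rvert,u]=\emptyset$, and then $X$ (extended by $0$) is a subset of $A$ giving a wrong value at $x$.

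This alone does not yet defeat $N_e$, since $X$ may omit only finitely many elements. The fix is the combinatorial heart, exactly as in Lemma~\ref{lem:wtt-req}. We handle each pair $(n,e)$ and each string $\alpha\in\{0,1\}^n$ coding $X$ on $m_0,\dots,m_{n-1}$. For each $\alpha$ we ask $K$ whether some fresh $x$ has $\Phi_a^{X_\alpha}(x)\downarrow=1$, where $X_\alpha$ is $\alpha$ on the markers and $0$ elsewhere.

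\textbf{Case (ii).} For a given $\alpha$ no such $x$ exists. Then any $X\subseteq A$ with pattern $\alpha$ and $X(m_n)=0$ must produce $\Phi_a^X(m_n)\ne1$ if nothing of $X$ lies between the markers below the use. This is where the difficulty lies: the new marker $m_n$ is then chosen, and its own computation's use must be kept clear of further elements.

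To achieve this, after placing $m_n$ we ask $K$ for each $\alpha\in\{0,1\}^{n}$ whether $\Phi_a^{X_\alpha}(m_n)$ converges. If it does, we record its use, and we take the next marker beyond the maximum of all these uses. The index $\varphi_b$ matters only to guarantee totality, which $K$ cannot decide. This is harmless, since if $\Phi_a^X$ diverges on some marker the reduction fails anyway.

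The main obstacle is therefore the interleaving. At each $(n,e)$ with $e\le n$ and each $\alpha$, we first try the Case (i) diagonalization with a fresh non-element $x$ and clear out to its use. Failing that, we rely on Case (ii) to place $m_n$ so that every pattern with $X(m_n)=0$ yields $\Phi_a^X(m_n)\ne1$ or divergence.

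Verification then mirrors Lemma~\ref{lem:wtt-req}. Suppose a counterexample $X$ exists. Pick $n\ge e$ with $X(m_n)=0$ and let $\alpha=X\upharpoonright\{m_0,\dots,m_{n-1}\}$. Then $X$ agrees with $X_\alpha$ up to the recorded use of $m_n$. So either Case (i) produced a fresh $x\notin A$ with $\Phi_a^X(x)=1$, or Case (ii) makes $\Phi_a^X(m_n)\ne1=A(m_n)$; either way this is a contradiction. Since everything is a finite extension computed from $K$, $A\in\Delta^0_2$. Notably, this argument does not really use the bound $\varphi_b$, which suggests it also covers $\le_D$.
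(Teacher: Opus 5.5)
Your Case (ii) verification has a genuine gap, and your closing remark that $\varphi_b$ is not really used is its symptom. The only spacing you impose after placing $m_n$ comes from the uses of those computations $\Phi_a^{X_\alpha}(m_n)$ that converge, where $X_\alpha$ is empty above $m_{n-1}$. The real oracle $X$, however, contains later elements of $A$. When $\Phi_a^{X_\alpha}(m_n)\uparrow$, nothing is recorded, so $\Phi_a^X(m_n)$ may converge to $1$ by querying elements of $X$ far above $m_n$. Your step ``$X$ agrees with $X_\alpha$ up to the recorded use of $m_n$'' is then unjustified. Likewise, ``if $\Phi_a^X$ diverges on some marker the reduction fails anyway'' confuses divergence on $X_\alpha$ with divergence on $X$.

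No argument that ignores $\varphi_b$ can succeed here. Let $A=\lim_s A_s$ be any infinite $\Delta^0_2$ set, and let $m_A(x)$ be the stage after which $A_s(x)$ is constant. Choose $B\subseteq A$ with $\lvert A\setminus B\rvert=\infty$ and $p_B(x)\ge m_A(x)$ for all $x$. Then $A(x)=A_{p_B(x)}(x)$, so $A\le_T B$. The functional that does this (find the first $x+1$ elements of the oracle, then consult the approximation) diverges at every fresh input on every finite oracle $X_\alpha$. So in your construction Case (i) never fires for it and no use is ever recorded. Yet your verification, which never invokes the query bound, would conclude that it does not compute $A$ from $B$.

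The missing idea is the one the paper's proof rests on. The true computation on input $y$ makes at most $k=\varphi_b(y)$ distinct queries, so only the finitely many answer sequences in $\{0,1\}^{\le k}$ matter. Hence $K$ can decide which simulated paths halt within their budget and compute the largest query $U$ asked along any of them. This bound is valid for \emph{every} oracle, not just for $X_\alpha$. Suppose that after placing $m_n$ you keep $A$ empty up to this $U$, for each $e=\langle a,b\rangle\le n$ with $\varphi_b(m_n)\downarrow$. Then $X$ and $X_\alpha$ agree on every position the real computation $\Phi_a^X(m_n)$ can query, so $\Phi_a^{X_\alpha}(m_n)\downarrow=1$, contradicting Case (ii).

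With that change your decoy/marker scheme does go through, as a priority-free $K$-computable construction. The paper instead fixes a global $K$-computable spacing function $h$ built from these bounds. It then runs a priority argument in which $N_c$ acts by appending a $0$, and it bounds those actions by counting viable strings.

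Your recorded-use idea is in fact adequate for $\le_D$. There $X_\alpha$ is finite, hence c.e., so divergence of $\Phi_a^{X_\alpha}(m_n)$ already disqualifies $\Phi_a$. For $\le_{bs}$ no such disqualification is available, which is exactly why the query bound must be used.
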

\smallskip

\begin{proof}
We construct $A \in \Delta^0_2$ by finite extensions, satisfying the following requirements for all $a,b,e \in \omega$:
\begin{align*}
P_{2e}:\ & \lvert A\rvert \geq e,\\
N_{2\langle a,b\rangle+1}:\ & (\Phi_a,\varphi_b)\text{ does not } bs\text{-reduce } A \text{ to any } X\subseteq A \text{ with } \lvert A\setminus X\rvert=\infty.
\end{align*}

\paragraph{Strategy and Dynamic Spacing.} 
The set $A$ will be constructed by stages $s=0,1,\ldots$. At stage $s$, we define a finite string $\alpha_s \in \{0,1\}^s$, letting $A_s = X_{\alpha_s} = \{h(n) : n < s \wedge \alpha_s(n)=1\}$, where $h$ is a strictly increasing function. The final set is $A = \bigcup_s A_s$.

Unlike the $\le_{wtt}$ case, the use of $bs$-reductions cannot be bounded a priori by a single total computable function. To overcome this, we define $h(s)$ \emph{dynamically} using the halting set $K \equiv_T \emptyset'$. 

Given a negative requirement $N_c$ where $c = 2\langle a, b \rangle+1$ and an input $x$, we use $K$ to compute a $K$-computable upper bound $U(c, x)$ on the use of any computation of $\Phi_a(x)$ making at most $\varphi_b(x)$ oracle queries.
Specifically, we ask $K$ whether $\varphi_b(x)\downarrow$. If not, set $U(c,x)=0$. If it converges, let $k = \varphi_b(x)$. For each $\sigma \in \{0,1\}^{\le k}$, we simulate $\Phi_a(x)$ by feeding the successive oracle answers prescribed by $\sigma$. Using $K$, we decide whether this simulation halts after asking at most $|\sigma|$ queries. Let $U(c,x)$ be the maximum query asked in any such halting simulation (or $0$ if there is none). Moreover, if $c$ is even we conventionally set $U(c,x)=0$ for every $x$.

We define the strictly increasing sequence $h(n)$ recursively:
\begin{displaymath}
h(0) = 0, \quad h(s+1) = \max \Big( \{h(s)\} \cup \{ U(c, h(m)) : c \le s,\ m \le s \} \Big) + 1.
\end{displaymath}

Because $U$ is $K$-computable, the sequence $h$ is $K$-computable. This definition enforces the following crucial structural bound:

\begin{claim}\label{claim:dynamic_bs}
Let $c=2\langle a,b\rangle+1$ be odd. If $s \geq \max(c,m)$ and for some oracle $F$ the computation $\Phi_a^F(h(m))$ halts making at most $\varphi_b(h(m))$ queries, then every query asked in this computation is $< h(s+1)$.
\end{claim}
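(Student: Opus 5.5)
The plan is to unfold the definitions of $U$ and $h$, compare the given computation with one of the simulations over which $U(c,h(m))$ is maximized, and then use monotonicity of $h$.

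\textbf{Step 1: $\varphi_b(h(m))$ converges.} Fix an oracle $F$ such that $\Phi_a^F(h(m))$ halts with at most $\varphi_b(h(m))$ queries. This hypothesis already presupposes that $\varphi_b(h(m))\downarrow$. Set $k=\varphi_b(h(m))$. By the normalization Remark, the functional never repeats a query, so the computation asks distinct queries $q_1,\dots,q_j$ with $j\le k$.

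\textbf{Step 2: match the computation with a simulated string.} Let $\sigma\in\{0,1\}^j$ be the string with $\sigma(i)=F(q_{i+1})$, the sequence of oracle answers actually received. The run of $\Phi_a(h(m))$ that is fed the answers prescribed by $\sigma$ proceeds exactly like $\Phi_a^F(h(m))$. The reason is that the program's behaviour depends only on the answers it receives, in order. So this simulation asks the queries $q_1,\dots,q_j$ and halts after exactly $|\sigma|=j\le k$ queries. Hence $\sigma$ is one of the strings in $\{0,1\}^{\le k}$ whose simulation $K$ certifies as halting in the definition of $U(c,h(m))$. Since $U$ takes the maximum query over all such halting simulations, $q_i\le U(c,h(m))$ for every $i$.

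\textbf{Step 3: compare with $h(s+1)$.} The hypothesis $s\ge\max(c,m)$ gives $c\le s$ and $m\le s$, so $U(c,h(m))$ is one of the terms in the maximum defining $h(s+1)$. Therefore
\[
q_i\le U(c,h(m))\le \max\Big(\{h(s)\}\cup\{U(c',h(m')):c',m'\le s\}\Big)<h(s+1).
\]

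The only delicate point is the correspondence in Step 2 between an oracle computation and a simulation driven by a finite answer string. The simulation must use the same number of queries, and it must count as ``halting after asking at most $|\sigma|$ queries.'' Taking $\sigma$ of length exactly the number of queries asked settles this. The no-repetition normalization makes the query count equal $\lvert Q_a^F(h(m))\rvert$, so it agrees with the $bs$ bound.
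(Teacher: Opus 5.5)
Your proposal is correct and follows essentially the same route as the paper: you identify the actual answer sequence $\sigma\in\{0,1\}^{\le k}$ with one of the halting simulations that define $U(c,h(m))$, bound every query by $U(c,h(m))$, and then use $c,m\le s$ in the recursive definition of $h(s+1)$. Your added care is sound but changes nothing in substance: you note that $\varphi_b(h(m))$ must converge, take $\sigma$ of length exactly the number of queries, and invoke the no-repetition normalization, which makes explicit what the paper leaves implicit.
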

\begin{proof}[Proof of Claim \ref{claim:dynamic_bs}]
Let $k=\varphi_b(h(m))$. The actual successive oracle answers seen in the computation form some $\sigma \in \{0,1\}^{\le k}$. By definition of $U(c,h(m))$, every query asked in the computation is at most $U(c,h(m))$. Since $c \le s$ and $m \le s$, the recursive definition of $h$ gives $U(c,h(m)) < h(s+1)$. Hence every query asked in the computation is $< h(s+1)$.
\end{proof}

\paragraph{Construction.}
Given any string $\alpha$, let $X_{\alpha} = \{h(n) : n < |\alpha| \wedge \alpha(n)=1\}$.
Fix a stage $s+1$, having constructed $\alpha_s$ of length $s$. Requirement $P_{2e}$ requires attention if $|X_{\alpha_s}| < e$. Requirement $N_c$ (with $c \le s$) requires attention via a string $\alpha \in \{0,1\}^s$ if:
\begin{enumerate}[label=\textbf{C\arabic*}]
    \item $\alpha(m) \le \alpha_s(m)$ for all $m < s$ (i.e., $X_{\alpha} \subseteq X_{\alpha_s}$);
    \item for every $m < s$, $\Phi_a^{X_{\alpha}}(h(m))\downarrow = \alpha_s(m)$ and the computation halts making $\le \varphi_b(h(m))$ queries;
    \item $\Phi_a^{X_{\alpha 0}}(h(s))\downarrow = 1$ and the computation halts making $\le \varphi_b(h(s))$ queries.
\end{enumerate}
A requirement is \emph{active} if it is the highest priority requirement requiring attention.

\begin{itemize}
    \item \textbf{Stage 0:} $\alpha_0 = \emptyset$.
    \item \textbf{Stage $s+1$:} Use $K$ to compute $h(s+1)$ and let $R_n$ be the least requirement with $n \le 2(s+1)$ that requires attention at stage $s+1$. Such an $n$ exists, since $P_{2(s+1)}$ requires attention: indeed, $|X_{\alpha_s}| \le s < s+1$. If $n$ is even, set $\alpha_{s+1} = \alpha_s 1$; if $n$ is odd, set $\alpha_{s+1} = \alpha_s 0$.
\end{itemize}
We set $A = \bigcup_s X_{\alpha_s}$. Since the search for active requirements and the generation of $h(s)$ only require finitely many queries to $K$, $A \le_T K$, so $A \in \Delta^0_2$.

\paragraph{Verification.}
We prove by induction on the priority that every requirement acts only finitely often and is met.

First let $R_n=P_{2e}$ be even, and assume inductively that every requirement $R_d$ with $d<n$ acts only finitely often. Choose a stage $s_0>n$ such that no requirement $R_d$ with $d<n$ acts after stage $s_0$. If $|X_{\alpha_{s_0}}|\ge e$, then $P_{2e}$ is already satisfied. Otherwise, at every later stage $t+1$ with $|X_{\alpha_t}|<e$, the requirement $P_{2e}$ requires attention. Since no smaller-index requirement acts after $s_0$, it is active whenever it requires attention, so each such action appends a $1$ and increases $|X_{\alpha_t}|$ by one. Therefore $P_{2e}$ acts only finitely often and is eventually satisfied.

Now fix an odd requirement $N_c$, say $c=2\langle a,b\rangle+1$, and assume inductively that every requirement $R_d$ with $d<c$ acts only finitely often. Choose a stage $s_0>c$ such that no requirement $R_d$ with $d<c$ acts after stage $s_0$.

For each $t \in \omega$, let $V_t^c$ be the set of all strings $\beta \in \{0,1\}^t$ such that:
\begin{enumerate}[label=(V\arabic*)]
    \item $\beta(m) \le \alpha_t(m)$ for all $m<t$;
    \item for every $m<t$, $\Phi_a^{X_{\beta}}(h(m))\downarrow = \alpha_t(m)$ and the computation halts making $\le \varphi_b(h(m))$ queries.
\end{enumerate}
Thus $N_c$ requires attention at stage $t+1$ via $\beta$ if and only if $\beta \in V_t^c$ and
\[
\Phi_a^{X_{\beta 0}}(h(t))\downarrow = 1
\]
with at most $\varphi_b(h(t))$ queries.

\begin{claim}\label{claim:prefix_validity}
If $s_0 \le s < t$ and $\beta \in V_t^c$, then $\beta{\upharpoonright}s \in V_s^c$.
\end{claim}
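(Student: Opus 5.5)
The plan is to check the two defining clauses (V1) and (V2) of $V_s^c$ for $\beta{\upharpoonright}s$ one at a time. For (V1) only the finite-extension structure of the construction is needed; for (V2) the spacing bound of Claim~\ref{claim:dynamic_bs} is the key input. First I record that $\alpha_s=\alpha_t{\upharpoonright}s$, because each stage appends exactly one bit to the previous string. Then (V1) is immediate: for $m<s$ we have
\[
(\beta{\upharpoonright}s)(m)=\beta(m)\le\alpha_t(m)=\alpha_s(m).
\]

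For (V2), I fix $m<s$. From $\beta\in V_t^c$ and $m<t$, the computation $\Phi_a^{X_\beta}(h(m))$ converges to $\alpha_t(m)=\alpha_s(m)$ with at most $\varphi_b(h(m))$ queries; in particular $\varphi_b(h(m))\downarrow$. Now I apply Claim~\ref{claim:dynamic_bs} with $s-1$ in place of $s$ and oracle $F=X_\beta$. Its hypothesis $s-1\ge\max(c,m)$ holds because $m\le s-1$, and because $s\ge s_0>c$ gives $c\le s-1$. The claim then says that every query in this computation is $<h(s)$.

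Next I compare the oracles below $h(s)$. Since $h$ is strictly increasing, every element $h(n)$ with $n\ge s$ is $\ge h(s)$. Hence
\[
X_\beta\cap h(s)=\{h(n):n<s\wedge\beta(n)=1\}=X_{\beta{\upharpoonright}s}\cap h(s),
\]
so $X_\beta$ and $X_{\beta{\upharpoonright}s}$ agree on $[0,h(s))$. The computation $\Phi_a^{X_{\beta{\upharpoonright}s}}(h(m))$ therefore asks the same queries, receives the same answers, and halts with the same output $\alpha_s(m)$ and the same number of queries $\le\varphi_b(h(m))$. This verifies (V2) for $\beta{\upharpoonright}s$, and so $\beta{\upharpoonright}s\in V_s^c$.

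The only delicate point is the index bookkeeping when invoking Claim~\ref{claim:dynamic_bs}. It bounds queries by $h(s'+1)$ only when $s'\ge\max(c,m)$, so the choice $s'=s-1$ must be justified. This is exactly where the hypothesis $s\ge s_0>c$ is used, and it also guarantees $s\ge1$. The query-count bound $\varphi_b(h(m))$ depends only on the input and not on the oracle, so it transfers unchanged.
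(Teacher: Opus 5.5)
Your proposal is correct and matches the paper's proof essentially step for step. You get (V1) from $\alpha_t$ extending $\alpha_s$, and (V2) by applying Claim~\ref{claim:dynamic_bs} with $s-1$ in place of $s$ (justified by $s\ge s_0>c$ and $m<s$), which puts every query below $h(s)$, where $X_\beta$ and $X_{\beta\upharpoonright s}$ agree.
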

\begin{proof}
Condition (V1) is immediate because $\alpha_t$ extends $\alpha_s$.

Fix $m<s$. Since $\beta \in V_t^c$, the computation $\Phi_a^{X_{\beta}}(h(m))$ halts making at most $\varphi_b(h(m))$ queries. Because $s \ge s_0 > c$ and $m<s$, we have $\max(c,m) \le s-1$. 
Claim~\ref{claim:dynamic_bs}, applied with $s-1$ in place of $s$, shows that every query asked in this computation is $< h(s)$.

Now $X_{\beta{\upharpoonright}s}$ and $X_{\beta}$ agree below $h(s)$, so
\[
\Phi_a^{X_{\beta{\upharpoonright}s}}(h(m)) = \Phi_a^{X_{\beta}}(h(m)) = \alpha_t(m) = \alpha_s(m).
\]
Thus (V2) holds as well, and $\beta{\upharpoonright}s \in V_s^c$.
\end{proof}

\begin{claim}\label{claim:no_branching_valid}
Let $s \ge s_0$, let $\alpha \in V_s^c$, and suppose that $\beta \in V_t^c$ extends $\alpha$ for some $t>s$. Then
\[
\beta(s)=\alpha_{s+1}(s).
\]
\end{claim}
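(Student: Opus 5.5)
The plan is to compare $\beta(s)$ with the bit the construction actually chose at stage $s+1$, using condition (V1) for one inequality and the fact that $N_c$ has top priority after $s_0$ for the other.

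First, since $\beta\in V_t^c$, condition (V1) gives $\beta(s)\le \alpha_t(s)$. Because $\alpha_t$ extends $\alpha_{s+1}$, we have $\alpha_t(s)=\alpha_{s+1}(s)$. So if $\alpha_{s+1}(s)=0$, then $\beta(s)=0$ and we are done. The whole content is therefore the case $\alpha_{s+1}(s)=1$, where we must rule out $\beta(s)=0$.

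Suppose, towards a contradiction, that $\alpha_{s+1}(s)=1$ and $\beta(s)=0$. Since $\beta$ extends $\alpha\in\{0,1\}^s$, this means $\beta{\upharpoonright}(s+1)=\alpha0$. I would then show that $\alpha0\in V_{s+1}^c$.
\begin{itemize}
\item If $t=s+1$, this is just $\beta\in V_t^c$.
\item If $t>s+1$, it follows from Claim~\ref{claim:prefix_validity} applied with $s+1$ in place of $s$, which is allowed because $s_0\le s+1<t$.
\end{itemize}
Condition (V2) for $\alpha0\in V_{s+1}^c$, taken at $m=s$, says that $\Phi_a^{X_{\alpha0}}(h(s))\downarrow=\alpha_{s+1}(s)=1$ with at most $\varphi_b(h(s))$ queries. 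Together with $\alpha\in V_s^c$, this is exactly the criterion recorded after the definition of $V_t^c$: $N_c$ requires attention at stage $s+1$ via $\alpha$. The side condition $c\le s$ needed for $N_c$ to be considered holds, since $s\ge s_0>c$.

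The step I expect to need the most care is converting \enquote{$N_c$ requires attention} into \enquote{$N_c$ acts}. At stage $s+1$ the construction picks the least $R_n$ with $n\le 2(s+1)$ that requires attention. Since $N_c$ requires attention and $c\le 2(s+1)$, this least index is at most $c$. It cannot be some $d<c$: stage $s+1>s_0$, and by the choice of $s_0$ no $R_d$ with $d<c$ acts after $s_0$. Hence $N_c$ is the active requirement at stage $s+1$. Since $c$ is odd, the construction sets $\alpha_{s+1}=\alpha_s0$, so $\alpha_{s+1}(s)=0$, contradicting $\alpha_{s+1}(s)=1$. Therefore $\beta(s)=1=\alpha_{s+1}(s)$, and in both cases $\beta(s)=\alpha_{s+1}(s)$.
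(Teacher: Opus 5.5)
Your proof is correct and follows the paper's argument. You use (V1) when $\alpha_{s+1}(s)=0$. When $\alpha_{s+1}(s)=1$ and $\beta(s)=0$, you show $N_c$ would require attention via $\alpha$ at stage $s+1$ and, by the choice of $s_0$, act, forcing $\alpha_{s+1}(s)=0$. The one cosmetic difference is that you obtain $\Phi_a^{X_{\alpha0}}(h(s))\downarrow=1$ by invoking Claim~\ref{claim:prefix_validity} to get $\alpha0\in V_{s+1}^c$, whereas the paper applies Claim~\ref{claim:dynamic_bs} directly to transfer the computation from $X_\beta$ to $X_{\alpha0}$, which is exactly the argument packaged inside that prefix claim.
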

\begin{proof}
If $\alpha_{s+1}(s)=0$, then $\alpha_t(s)=0$ for every $t>s$, and since $\beta \in V_t^c$, condition (V1) yields
\[
\beta(s) \le \alpha_t(s)=0.
\]
Hence $\beta(s)=0$.

Suppose now that $\alpha_{s+1}(s)=1$, and assume for contradiction that $\beta(s)=0$. Since $\beta \in V_t^c$, condition (V2) at $m=s$ gives
\[
\Phi_a^{X_{\beta}}(h(s))\downarrow = \alpha_t(s)=1
\]
with at most $\varphi_b(h(s))$ queries.
Because $s \ge s_0 > c$, Claim~\ref{claim:dynamic_bs} shows that every query asked in this computation is $< h(s+1)$.

Since $\beta$ extends $\alpha$ and $\beta(s)=0$, the sets $X_{\beta}$ and $X_{\alpha 0}$ agree below $h(s+1)$. Therefore
\[
\Phi_a^{X_{\alpha 0}}(h(s))\downarrow = 1
\]
with at most $\varphi_b(h(s))$ queries. As $\alpha \in V_s^c$, this means that $\alpha$ satisfies \textbf{C1}--\textbf{C3} at stage $s+1$, so $N_c$ requires attention at stage $s+1$.

If some requirement $R_d$ with $d<c$ required attention at stage $s+1$, then by priority it would act at stage $s+1$, contradicting the choice of $s_0$. Hence no such $R_d$ requires attention, and therefore $N_c$ is active at stage $s+1$. This forces $\alpha_{s+1}(s)=0$, contradicting our assumption that $\alpha_{s+1}(s)=1$.

Therefore $\beta(s)=1$, as required.
\end{proof}

\begin{claim}\label{claim:kill_valid_branch}
If $N_c$ acts at stage $s+1$ via some $\alpha \in V_s^c$, then no $\beta \in V_t^c$ with $t>s$ extends $\alpha$.
\end{claim}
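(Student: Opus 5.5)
We argue by contradiction. Suppose $N_c$ acts at stage $s+1$ via $\alpha\in V_s^c$, and suppose some $\beta\in V_t^c$ with $t>s$ extends $\alpha$. Since $N_c$ acts, the string appended is $\alpha_{s+1}=\alpha_s 0$, so $\alpha_{s+1}(s)=0$. We will show that $\beta$ must then reproduce exactly the computation that witnessed \textbf{C3} at stage $s+1$, and that this computation outputs $1$ where (V2) demands output $0$.

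First we pin down $\beta(s)$. Condition (V1) for $\beta$ gives $\beta(s)\le\alpha_t(s)$. Since $\alpha_t$ extends $\alpha_{s+1}$, we have $\alpha_t(s)=\alpha_{s+1}(s)=0$, and hence $\beta(s)=0$. This can also be read off from Claim~\ref{claim:no_branching_valid}, but only if $s\ge s_0$; the direct argument from (V1) needs no such hypothesis. So $\beta\upharpoonright(s+1)=\alpha 0$.

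Next we compare the oracles. Condition (V2) for $\beta$ at $m=s<t$ says that $\Phi_a^{X_\beta}(h(s))\downarrow=\alpha_t(s)=0$, with at most $\varphi_b(h(s))$ queries. On the other hand, \textbf{C3} at stage $s+1$ via $\alpha$ says that $\Phi_a^{X_{\alpha 0}}(h(s))\downarrow=1$, also with at most $\varphi_b(h(s))$ queries. Both computations respect the query bound. We then apply Claim~\ref{claim:dynamic_bs} with $m=s$, which requires $s\ge c$. This gives that every query in either computation is $<h(s+1)$.

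The sets $X_\beta$ and $X_{\alpha 0}$ agree below $h(s+1)$, because $\beta$ extends $\alpha 0$ and $h$ is strictly increasing. The two computations therefore see the same oracle answers, follow the same path, and produce the same output. This gives $0=1$, a contradiction.

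The main obstacle is the hypothesis $s\ge c$ needed to invoke Claim~\ref{claim:dynamic_bs}. It holds automatically, because $N_c$ can require attention at stage $s+1$ only when $c\le s$, by the construction. Nothing about $s_0$ is needed. A more careful argument would avoid the claim altogether: run both computations in lockstep. Each query $q$ either lies below $h(s+1)$, where the oracles agree, or it does not. In fact the computation on $X_{\alpha 0}$ is bounded by $U(c,h(s))<h(s+1)$ directly from the definition of $h$. So as long as the two runs coincide, they stay below $h(s+1)$, and by induction on the query steps they coincide throughout. This confirms that the comparison is sound.
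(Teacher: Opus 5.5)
Your proof is correct and follows the paper's argument essentially step for step: (V1) forces $\beta(s)=0$, Claim~\ref{claim:dynamic_bs} bounds every query of the \textbf{C3} computation below $h(s+1)$, and the agreement of $X_\beta$ with $X_{\alpha 0}$ below $h(s+1)$ then contradicts (V2) at $m=s$. You also make explicit that the construction only lets $N_c$ require attention when $c\le s$, which is what licenses the use of Claim~\ref{claim:dynamic_bs}; the paper leaves this implicit.
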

\begin{proof}
Since $N_c$ acts at stage $s+1$, we have $\alpha_{s+1}(s)=0$, and therefore $\alpha_t(s)=0$ for every $t>s$.

Suppose that $\beta \in V_t^c$ extends $\alpha$ for some $t>s$. By (V1),
\[
\beta(s) \le \alpha_t(s)=0,
\]
so $\beta(s)=0$.

Because $N_c$ acted via $\alpha$, condition \textbf{C3} held at stage $s+1$, i.e.
\[
\Phi_a^{X_{\alpha 0}}(h(s))\downarrow = 1
\]
with at most $\varphi_b(h(s))$ queries.
Again Claim~\ref{claim:dynamic_bs} shows that every query asked in this computation is $< h(s+1)$.

Since $\beta$ extends $\alpha$ and $\beta(s)=0$, the sets $X_{\beta}$ and $X_{\alpha 0}$ agree below $h(s+1)$. Hence
\[
\Phi_a^{X_{\beta}}(h(s)) = \Phi_a^{X_{\alpha 0}}(h(s)) = 1.
\]
But because $\beta \in V_t^c$, condition (V2) at $m=s$ requires
\[
\Phi_a^{X_{\beta}}(h(s)) = \alpha_t(s)=0,
\]
a contradiction. Therefore no such $\beta$ exists.
\end{proof}

Claims~\ref{claim:prefix_validity} and \ref{claim:no_branching_valid} imply that for every $t \ge s_0$, the restriction map
\[
\beta \mapsto \beta{\upharpoonright}t
\]
sends $V_{t+1}^c$ injectively into $V_t^c$. Moreover, if $N_c$ acts at stage $t+1$, then Claim~\ref{claim:kill_valid_branch} shows that some element of $V_t^c$ has no extension in $V_{t+1}^c$. Hence
\[
|V_{t+1}^c| < |V_t^c|
\]
whenever $N_c$ acts at stage $t+1$, while always $|V_{t+1}^c| \le |V_t^c|$ for $t \ge s_0$.
Since $|V_{s_0}^c| \le 2^{s_0}$, it follows that $N_c$ acts only finitely often.

It remains to show that $N_c$ is met. Suppose for contradiction that there exists $B \subseteq A$ with $\lvert A \setminus B\rvert = \infty$ such that $\Phi_a^B = A$ and, for every $x$, the computation $\Phi_a^B(x)$ makes at most $\varphi_b(x)$ queries.

Because $A \setminus B$ is infinite and $A \subseteq \{h(n): n \in \omega\}$, we may choose $s \ge s_0$ such that
\[
A(h(s))=1 \qquad \text{and} \qquad B(h(s))=0.
\]
Define $\alpha \in \{0,1\}^s$ by
\[
\alpha(m)=B(h(m)) \qquad (m<s).
\]

We claim that $\alpha \in V_s^c$. First, since $B \subseteq A$ and $\alpha_s(m)=A(h(m))$ for all $m<s$, we have
\[
\alpha(m) \le \alpha_s(m) \qquad (m<s),
\]
so (V1) holds.

Now fix $m<s$. Since $\Phi_a^B = A$, we have
\[
\Phi_a^B(h(m)) = A(h(m)) = \alpha_s(m),
\]
and this computation makes at most $\varphi_b(h(m))$ queries. 
Because $s \ge s_0 > c$ and $m<s$, Claim~\ref{claim:dynamic_bs}, applied with $s-1$ in place of $s$, shows that every query asked in this computation is $< h(s)$.

Moreover, because $B \subseteq A \subseteq \{h(n): n \in \omega\}$ and $\alpha(m)=B(h(m))$ for $m<s$, we have
\[
B \cap [0,h(s)) = X_{\alpha} \cap [0,h(s)).
\]
Therefore
\[
\Phi_a^{X_{\alpha}}(h(m)) = \Phi_a^B(h(m)) = \alpha_s(m),
\]
with at most $\varphi_b(h(m))$ queries. Thus (V2) holds, and so $\alpha \in V_s^c$.

Finally,
\[
\Phi_a^B(h(s)) = A(h(s)) = 1,
\]
again with at most $\varphi_b(h(s))$ queries.
Claim~\ref{claim:dynamic_bs} shows that every query asked in this computation is $< h(s+1)$.
Since $B(h(s))=0$ and $B \subseteq A \subseteq \{h(n): n \in \omega\}$, we have
\[
B \cap [0,h(s+1)) = X_{\alpha 0} \cap [0,h(s+1)).
\]
Hence
\[
\Phi_a^{X_{\alpha 0}}(h(s))\downarrow = 1
\]
with at most $\varphi_b(h(s))$ queries. Therefore $\alpha$ witnesses that $N_c$ requires attention at stage $s+1$.

As before, no requirement $R_d$ with $d<c$ can require attention at stage $s+1$, for otherwise it would act after stage $s_0$. Thus $N_c$ is active at stage $s+1$ and acts, forcing $\alpha_{s+1}(s)=0$. Consequently every later $\alpha_t(s)$ is also $0$, and therefore $A(h(s))=0$, contradicting the choice of $s$.

This contradiction shows that no such set $B$ exists. Hence $N_c$ is met.

By induction on the priority, every requirement is met. Therefore $A$ is $bs$-introimmune.
\end{proof}

\vspace{2em}

\begin{theorem}\label{thm:D_delta2}
There exists a set $A \in \Delta^0_2$ which is $D$-introimmune. 
\end{theorem}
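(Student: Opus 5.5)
The plan is to mimic the proof of Theorem~\ref{thm:bs_delta2} almost verbatim, replacing the $bs$-use bound by a $K$-computable bound on the use of a $D$-reduction. The requirements are $P_{2e}\colon |A|\ge e$ and $N_{2a+1}$: if $\Phi_a^W$ is total for every c.e.\ set $W$, then $\Phi_a$ does not compute $A$ from any $X\subseteq A$ with $|A\setminus X|=\infty$.

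The key observation is the following. Suppose $\Phi_a^W$ is total for all c.e.\ $W$. Then, for each input $x$, the tree of oracle strings $\sigma$ along which $\Phi_a^\sigma(x)$ has not yet halted has no infinite path through the class of all oracles. Indeed, I would argue that $\Phi_a^F(x)\downarrow$ for every oracle $F$: if not, the computable-in-$F$ failure would let us build a finite-support oracle on which $\Phi_a$ diverges. Finite sets are c.e., so this contradicts totality on c.e.\ sets. More precisely, if some path avoided halting, then every finite initial segment $\sigma$ of it extends to the finite set $\sigma0^\infty$. Since $\Phi_a^{\sigma 0^\infty}(x)$ must halt, we only know halting on finite sets, not on all oracles.

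So the right bound is not over all oracles. Instead I will only need it for the oracles that actually arise in the construction, namely the finite sets $X_\beta$ and the sets $B\subseteq A\subseteq \mathrm{ran}(h)$. Accordingly, define $U(c,x)$ as follows. Using $K$, ask whether $\Phi_a^{D}(x)$ halts for each finite $D\subseteq \{h(0),\dots,h(m)\}$ with $x=h(m)$. This is $2^{m+1}$ many questions, each $\Sigma^0_1$, so each is answerable by $K$. Let $U(c,h(m))$ be the maximum query among those that halt, and set $h(s+1)$ to exceed all $U(c,h(m))$ with $c,m\le s$, exactly as before.

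Claim~\ref{claim:dynamic_bs} then needs to be restated for oracles of the form $X_\beta$ with $\beta$ long. The use of $\Phi_a^{X_\beta}(h(m))$ is bounded by $h(s+1)$, but the oracle $X_\beta$ may contain $h(n)$ for $n>m$, so it is not one of the sets enumerated. This is the main obstacle. The fix I would try is an inductive argument on the computation. Truncate $X_\beta$ to $X_\beta\cap[0,h(s+1))$, which is a subset of $\{h(0),\dots,h(s)\}$. The aim is to show that the computation only queries below $h(s+1)$ by enlarging the set of finite oracles considered to all subsets of $\{h(0),\dots,h(s)\}$ when defining $h(s+1)$. That is, $h(s+1)$ exceeds the use of $\Phi_a^D(h(m))$ for all $D\subseteq\{h(0),\dots,h(s)\}$ and all $m\le s$, $a\le s$.

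With this definition the argument runs as follows. Assume the computation on $X_\beta$ queried some $y\ge h(s+1)$. Then the computation on $D=X_\beta\cap[0,h(s+1))$ agrees with it up to the first such query. That first query, however, is made in $\Phi_a^D$, which halts with all queries below $h(s+1)$, a contradiction. Halting of $\Phi_a^D$ follows because $D$ is finite, hence c.e., and $\Phi_a$ is total on c.e.\ oracles when $N_c$ is non-vacuous. When $\Phi_a$ is not total on some $D$, the bound may fail, but then $N_c$ is vacuous. I only need to confine the verification to indices with $\Phi_a$ total on c.e.\ sets.

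For the limit oracle $B\subseteq A$, which need not be c.e., apply the same first-query argument with $D=B\cap[0,h(s+1))$, which is finite. The remaining claims, Claims~\ref{claim:prefix_validity}--\ref{claim:kill_valid_branch}, and the final contradiction, transfer unchanged, replacing ``at most $\varphi_b(h(m))$ queries'' by ``halts''. $K$ decides everything since the oracles are finite, so $A\le_T K$. This makes $A$ a $\Delta^0_2$ set that is $D$-introimmune.
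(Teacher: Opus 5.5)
There is a gap, though the construction itself is essentially the paper's. Like the paper, you have $K$ compute a spacing function $h$ with $h(s+1)$ above the uses of all convergent computations $\Phi_a^D(h(m))$, $m\le s$, for finite $D\subseteq\{h(0),\dots,h(s)\}$; the paper takes all $D\subseteq\{0,\dots,h(s)\}$. Your truncation argument is the same transfer the paper performs from $X_\alpha$ to $B$.

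The gap is the sentence ``I only need to confine the verification to indices with $\Phi_a$ total on c.e.\ sets.'' The verification you import from Theorem~\ref{thm:bs_delta2} is an induction showing that \emph{every} requirement acts only finitely often. That fact is used for each $P_{2e}$ and for the choice of $s_0$ in every lower-priority $N_{c'}$. Now take an index $a$ such that $\Phi_a^D(h(m))$ diverges for some finite $D\subseteq\{h(0),\dots,h(s)\}$. In your construction $N_c$ still requires attention and acts, and for it your use bound fails. The computation $\Phi_a^{X_\beta}(h(m))$ can converge only by querying an element of $X_\beta$ above $h(s+1)$ while its truncation diverges. Then Claims~\ref{claim:prefix_validity} and~\ref{claim:no_branching_valid} break, and $|V^c_t|$ need not be non-increasing. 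Nothing in your proposal shows that such an $N_c$ stops acting, and vacuous satisfaction of $N_c$ is not enough in a priority argument.

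The paper closes this with its \emph{disabling} clause. $N_c$ is permanently disabled as soon as the $K$-computation of $U(c,h(m),h(s))$ finds a divergent finite oracle. So either $N_c$ eventually never acts, or the use bound holds on all relevant finite oracles and the $V^c_t$-counting goes through even when $\Phi_a$ is not a genuine $D$-reduction.

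Alternatively, you can keep your construction and avoid needing finite action at all. Suppose $A$ were finite, say $A=A_{s^*}$. After $s^*$, only the finitely many $N_c$ with $c<2(|A|+1)$ could be active. For each $G\subseteq A$, each such $N_c$ acts at most once via a string $\gamma$ with $X_\gamma=G$. Indeed, acting at stage $t+1$ via $\gamma$ gives $\Phi_a^{X_\gamma}(h(t))=1$, while \textbf{C2} at any later action via $\gamma'$ demands $\Phi_a^{X_{\gamma'}}(h(t))=0$, so $X_{\gamma'}\neq X_\gamma$. This contradicts that some $N_c$ is active at every stage after $s^*$, so $A$ is infinite. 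Then, for a genuine $D$-reduction, the final contradiction needs only that no $P_{2e}$ with $2e<c$ requires attention at stage $s+1$. That holds for all large $s$, since any $N$-requirement active at that stage also appends a $0$.
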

(Since $\le_{D^+} \Rightarrow \le_D$, the set $A$ is automatically $D^+$-introimmune as well).

\begin{proof}
We construct $A \in \Delta^0_2$ by finite extensions, satisfying the following requirements for all $a,e \in \omega$:
\begin{align*}
P_{2e}:\ & \lvert A\rvert \geq e,\\
N_{2a+1}:\ & \Phi_a\text{ does not } D\text{-reduce } A \text{ to any } X\subseteq A \text{ with } \lvert A\setminus X\rvert=\infty.
\end{align*}

\paragraph{Strategy and Dynamic Spacing.} 
The set $A$ will be constructed by stages $s=0,1,\ldots$. At stage $s$, we define a finite string $\alpha_s \in \{0,1\}^s$, letting $A_s = X_{\alpha_s} = \{h(n) : n < s \wedge \alpha_s(n)=1\}$, where $h$ is a strictly increasing function. The final set is $A = \bigcup_{s} A_s$.

Unlike the $\le_{wtt}$ case, the use of $D$-reductions cannot be bounded a priori by a single total computable function. To overcome this, we define $h(s)$ \emph{dynamically} using the halting set $K \equiv_T \emptyset'$. 
Given a negative requirement $N_c$ where $c = 2a+1$, an input $x$, and a current bound $y$, we use $K$ to compute the maximum possible use $U(c, x, y)$ of the reduction over all valid partial oracle configurations bounded by $y$.
A valid $D$-reduction must be total on all c.e.\ sets. Since every finite set is c.e., $\Phi_a^F(x)$ must converge for every finite set $F$. There are exactly $2^{y+1}$ possible subsets $F \subseteq \{0, \dots, y\}$. Using $K$, we check if $\Phi_a^F(x)\downarrow$ for \emph{all} such subsets (assuming the oracle answers $0$ for any query $>y$). 
If it diverges for at least one, $\Phi_a$ is not a valid $D$-reduction, we set $U(c,x,y) = 0$, and requirement $N_c$ is permanently disabled.
If it converges for all of them, let $U(c,x,y)$ be the maximum use among these $2^{y+1}$ halting computations (we always set $U(c,x,y)=0$ if $c$ is even).
We define the strictly increasing sequence $h(n)$ recursively:
\begin{displaymath}
h(0) = 0, \quad h(s+1) = \max \Big( \{h(s)\} \cup \{ U(c, h(m), h(s)) : c \le s, m \le s \} \Big) + 1.
\end{displaymath}
Because $U$ is $K$-computable, the sequence $h$ is $K$-computable. This definition enforces the following crucial structural bound:

\begin{claim}\label{claim:dynamic_D}
If $N_c$ defines a valid $D$-reduction, then for every $m\in\omega$ and every stage $s\ge \max(c,m)$, the use of the computation on input $h(m)$ with any finite oracle $F \subseteq \{h(0), \dots, h(s)\}$ is strictly less than $h(s+1)$.
\end{claim}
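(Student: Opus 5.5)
The plan is to unwind the definition of $U(c,h(m),h(s))$ and check that the finite oracle $F$ in the statement is one of the configurations over which this maximum was taken. Fix $c=2a+1$ such that $\Phi_a$ is a valid $D$-reduction, so $\Phi_a^W$ is total for every c.e.\ set $W$. Every finite set is c.e., so for every $x$ and every $y$ the $2^{y+1}$ computations $\Phi_a^{F'}(x)$ with $F'\subseteq\{0,\dots,y\}$ all converge. Hence the $K$-test in the definition of $U(c,x,y)$ never disables $N_c$, and $U(c,x,y)$ really is the maximum use over these halting computations. I would state this first, to exclude the degenerate value $U=0$ that arises when the test fails.

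Next, fix $m$ and $s\ge\max(c,m)$, and a finite $F\subseteq\{h(0),\dots,h(s)\}$. Since $h$ is strictly increasing, $F\subseteq\{0,\dots,h(s)\}$. The simulations in the definition of $U(c,h(m),h(s))$ answer $0$ to every query above $y=h(s)$, and $F$ itself has no elements above $h(s)$. So the simulation associated with the subset $F'=F$ is literally the computation $\Phi_a^F(h(m))$: every query receives the same answer. Therefore the use of $\Phi_a^F(h(m))$ is at most $U(c,h(m),h(s))$.

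Finally, since $c\le s$ and $m\le s$, the pair $(c,m)$ is among those indexing the maximum in the recursive definition of $h(s+1)$. This gives $U(c,h(m),h(s))\le h(s+1)-1<h(s+1)$, so the use is strictly below $h(s+1)$.

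The only delicate point is the identification in the second step: the computations over which $U$ maximizes must be exactly the computations with the actual finite oracle $F$, rather than computations with some truncated or modified oracle. This holds because the convention for queries above $y$ is \virgolette{answer $0$}, which agrees with $F\subseteq[0,h(s)]$. I would also note the convention that \virgolette{use} means the largest query, so that \virgolette{use $<h(s+1)$} is exactly the bound delivered by the maximum in the definition of $U$.
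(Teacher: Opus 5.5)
Your proposal is correct and follows the paper's proof essentially step for step. Validity of the $D$-reduction rules out the disabling case. The given $F$ is one of the subsets of $\{0,\dots,h(s)\}$ over which $U(c,h(m),h(s))$ is maximized, and $c,m\le s$ places that value among the terms defining $h(s+1)$. Your extra check, that the ``answer $0$ above $y$'' convention makes the simulated computation literally coincide with $\Phi_a^F(h(m))$, is left implicit in the paper and is right.
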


\begin{proof}[Proof of Claim \ref{claim:dynamic_D}]
Assume that $N_c$ defines a valid $D$-reduction, and fix $m$ and $s \ge \max(c,m)$.
Let
\[
F \subseteq \{h(0),\dots,h(s)\}
\]
be finite. Since $h$ is strictly increasing, we have
\[
F \subseteq \{0,\dots,h(s)\}.
\]

Because $\Phi_a$ is a valid $D$-reduction, it is total on every c.e.\ oracle. In particular, for every finite set
\[
E \subseteq \{0,\dots,h(s)\},
\]
the computation $\Phi_a^E(h(m))$ converges. Hence, in the definition of $U(c,h(m),h(s))$, the disabling case does not occur, and
\[
U(c,h(m),h(s))
\]
is exactly the maximum use among the computations $\Phi_a^E(h(m))$ for all subsets $E \subseteq \{0,\dots,h(s)\}$.

Now our given set $F$ is one of these subsets. Therefore the use of $\Phi_a^F(h(m))$ is at most $U(c,h(m),h(s))$.

Finally, since $s \ge \max(c,m)$, we have $c \le s$ and $m \le s$, so the quantity $U(c,h(m),h(s))$ appears among the terms used to define $h(s+1)$. Thus
\[
U(c,h(m),h(s)) < h(s+1).
\]
It follows that the use of $\Phi_a^F(h(m))$ is strictly less than $h(s+1)$, as required.
\end{proof}

\paragraph{Construction.}
Given any string $\alpha$, let $X_{\alpha} = \{h(n) : n < |\alpha| \wedge \alpha(n)=1\}$.
Fix a stage $s+1$, having constructed $\alpha_s$ of length $s$. Requirement $P_{2e}$ requires attention if $|X_{\alpha_s}| < e$. Requirement $N_c$ (with $c < s$, and not permanently disabled) requires attention via a string $\alpha \in \{0,1\}^s$ if:
\begin{enumerate}[label=\textbf{C\arabic*}]
    \item for every $m < s$, $\alpha(m) \le \alpha_s(m)$;    
    \item for every $m < s$, $\Phi_a^{X_\alpha}(h(m))\downarrow = \alpha_s(m)$;
    \item $\Phi_a^{X_{\alpha 0}}(h(s))\downarrow = 1$.
\end{enumerate}
A requirement is \emph{active} if it is the highest priority requirement requiring attention.

\begin{itemize}
    \item \textbf{Stage 0:} $\alpha_0 = \emptyset$.
    \item \textbf{Stage $s+1$:} Use $K$ to compute $h(s+1)$. Let $R_n$ be the active requirement, if there is one. If no requirement requires attention, set $\alpha_{s+1}=\alpha_s1$. If $R_n=P_{2e}$, set $\alpha_{s+1}=\alpha_s1$. If $R_n=N_c$, then $N_c$ requires attention via some string $\alpha\in\{0,1\}^s$, and we act by setting $\alpha_{s+1}=\alpha_s0$.\end{itemize}
We set $A = \bigcup_s X_{\alpha_s}$. 
Since at each stage the search for active requirements and the computation of $h(s+1)$ require only finitely many queries to $K$, the construction is $K$-computable. Hence $A\le_T K$, and therefore $A\in\Delta^0_2$.

\paragraph{Verification.}
We verify by induction on the priority ordering that each requirement acts only finitely often and is satisfied.

Fix $e$. Let $s_0$ be a stage after which no requirement of higher priority than $P_{2e}$ acts. If $|A_{s_0}| \ge e$, then $P_{2e}$ is already met. Otherwise, for every stage $s \ge s_0$ with $|A_s|<e$, the requirement $P_{2e}$ is the active requirement, so we set $\alpha_{s+1}=\alpha_s1$. Each such action increases $|A_s|$ by one, and later stages never remove elements. Hence after finitely many stages we obtain $|A_s|\ge e$, and $P_{2e}$ is permanently satisfied.

Now fix $c=2a+1$. Let $s_0>c$ be a stage after which no requirement of higher priority than $N_c$ acts. If during the computation of some $U(c,h(m),h(s))$ we find a finite oracle $F$ such that $\Phi_a^F(h(m))\uparrow$, then $N_c$ is permanently disabled, and there is nothing to prove. So assume this never happens. Then for every $t\ge s_0$, every $m<t$, and every finite oracle $F \subseteq \{h(0),\dots,h(t-1)\}$, the computation $\Phi_a^F(h(m))$ has use $<h(t)$, by the definition of $U(c,h(m),h(t-1))$ and of $h(t)$.

For $t\ge s_0$, call a string $\gamma\in\{0,1\}^t$ \emph{viable at stage $t$} if it satisfies \textbf{C1} and \textbf{C2} with respect to $\alpha_t$.

If $\beta\in\{0,1\}^{t+1}$ is viable at stage $t+1$ and $\gamma=\beta\upharpoonright t$, then $\gamma$ is viable at stage $t$. Indeed, \textbf{C1} is immediate. For \textbf{C2}, fix $m<t$. By the use bound above, the computation $\Phi_a^{X_\gamma}(h(m))$ has use $<h(t)$. Since $X_\beta$ and $X_\gamma$ coincide below $h(t)$, we get
\[
\Phi_a^{X_\gamma}(h(m))=\Phi_a^{X_\beta}(h(m))=\alpha_{t+1}(m)=\alpha_t(m).
\]
Thus $\gamma$ is viable.

Now let $\gamma\in\{0,1\}^t$ be viable at stage $t$. We show that $\gamma$ has at most one viable extension to length $t+1$.

If $\alpha_{t+1}(t)=0$, then any viable extension $\beta$ of $\gamma$ must satisfy $\beta(t)\le \alpha_{t+1}(t)=0$ by \textbf{C1}, so necessarily $\beta=\gamma0$.

If $\alpha_{t+1}(t)=1$, then $\gamma0$ cannot be viable at stage $t+1$. For if it were, since $X_{\gamma0}=X_\gamma$, the string $\gamma$ would satisfy \textbf{C1} and \textbf{C2} at stage $t$, and moreover $\Phi_a^{X_{\gamma0}}(h(t))=1$. Hence $N_c$ would require attention via $\gamma$ at stage $t+1$. But after stage $s_0$ no higher priority requirement acts, so $N_c$ would then be active and we would have $\alpha_{t+1}(t)=0$, a contradiction. Therefore in this case the only possible viable extension is $\gamma1$.

Let $v_t$ be the number of viable strings of length $t$. The previous paragraphs show that every viable string of length $t+1$ extends a viable string of length $t$, and that each viable string of length $t$ has at most one viable extension. Therefore
\[
v_{t+1}\le v_t
\qquad\text{for every } t\ge s_0.
\]

Moreover, if $N_c$ acts at stage $t+1>s_0$ via a viable string $\gamma\in\{0,1\}^t$, then $\alpha_{t+1}(t)=0$. Hence $\gamma1$ fails \textbf{C1}, while $\gamma0$ fails \textbf{C2} at input $h(t)$, because \textbf{C3} gives
\[
\Phi_a^{X_{\gamma0}}(h(t))=1\neq \alpha_{t+1}(t)=0.
\]
So $\gamma$ has no viable extension at stage $t+1$, and thus $v_{t+1}<v_t$. Since $v_{s_0}\le 2^{s_0}$, the requirement $N_c$ can act at most $2^{s_0}$ times after stage $s_0$. In particular, $N_c$ acts only finitely often.

We now show that $N_c$ is met. Suppose toward a contradiction that $\Phi_a$ $D$-reduces $A$ to some set $B\subseteq A$ with $\lvert A\setminus B\rvert=\infty$. Choose $s\ge s_0$ such that
\[
A(h(s))=1
\qquad\text{and}\qquad
B(h(s))=0.
\]
Define $\alpha\in\{0,1\}^s$ by
\[
\alpha(m)=B(h(m))
\qquad (m<s).
\]
We claim that $N_c$ requires attention via $\alpha$ at stage $s+1$.

Condition \textbf{C1} holds because $B\subseteq A$, so for every $m<s$,
\[
\alpha(m)=B(h(m))\le A(h(m))=\alpha_s(m).
\]

To verify \textbf{C2}, fix $m<s$. Since $X_\alpha\subseteq \{h(0),\dots,h(s-1)\}$, Claim \ref{claim:dynamic_D} applied at stage $s-1$ gives that the computation $\Phi_a^{X_\alpha}(h(m))$ has use $<h(s)$. Also, because $B\subseteq A\subseteq \operatorname{range}(h)$, the sets $X_\alpha$ and $B$ coincide below $h(s)$. Therefore
\[
\Phi_a^{X_\alpha}(h(m))
=
\Phi_a^B(h(m))
=
A(h(m))
=
\alpha_s(m).
\]

Finally, to verify \textbf{C3}, note that $B(h(s))=0$. Hence $X_{\alpha0}$ and $B$ coincide below $h(s+1)$: below $h(s)$ this follows from the definition of $\alpha$, at $h(s)$ both answers are $0$, and any element of $B$ above $h(s)$ must be equal to $h(j)$ for some $j\ge s+1$, hence is at least $h(s+1)$. By Claim \ref{claim:dynamic_D} applied at stage $s$, the computation $\Phi_a^{X_{\alpha0}}(h(s))$ has use $<h(s+1)$. Thus
\[
\Phi_a^{X_{\alpha0}}(h(s))
=
\Phi_a^B(h(s))
=
A(h(s))
=
1.
\]
So $\alpha$ satisfies \textbf{C1}--\textbf{C3}, and therefore $N_c$ requires attention at stage $s+1$.

Since no higher priority requirement acts after $s_0$, it follows that $N_c$ acts at stage $s+1$. But then $\alpha_{s+1}(s)=0$, i.e.\ $A(h(s))=0$, contradicting the choice of $s$. This contradiction shows that no such set $B$ exists. Hence $N_c$ is met.

By induction, every requirement is met. Therefore $A$ is $D$-introimmune.

\end{proof}

\section{Arithmetical Q-introimmune sets}\label{sec:Q}

This section is devoted to the classical reducibility $\le_Q$, which falls outside the framework of reducibilities strictly below $\le_T$ on all subsets of $\omega$.
Here the situation can be determined exactly at the level of arithmetical existence: infinite $\Pi^0_1$ sets are never $Q$-introimmune, whereas $\Delta^0_2$ $Q$-introimmune sets do exist.

\begin{definition}[$Q$-reducibility]
Let $A,B\subseteq\omega$. We say that $A\le_Q B$ if there exists a total computable function $f$ such that
\[
x\in A \iff W_{f(x)}\subseteq B
\qquad\text{for all } x\in\omega.
\]
\end{definition}

\begin{theorem}
Let $A \subseteq \omega$ be a $\Pi^0_1$ set. Then $A \le_Q B$ for every $B \subseteq A$. In particular, no infinite $\Pi^0_1$ set is $Q$-introimmune.
\end{theorem}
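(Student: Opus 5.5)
We need $f$ such that $x\in A \iff W_{f(x)}\subseteq B$. Since $A$ is $\Pi^0_1$, its complement $\overline{A}$ is c.e.; fix a computable enumeration of $\overline{A}$. The idea is to let $W_{f(x)}$ be empty when $x\in A$, and to make it contain an element outside $B$ when $x\notin A$. The natural choice is to put $x$ itself into $W_{f(x)}$ once $x$ appears in $\overline{A}$: define $W_{f(x)}=\{x\}$ if $x\in\overline{A}$ and $W_{f(x)}=\emptyset$ otherwise. By the s-m-n theorem there is a total computable $f$ such that $W_{f(x)}$ is the domain of the partial computable procedure which, on input $y$, halts iff $y=x$ and $x$ is eventually enumerated into $\overline{A}$. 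So $W_{f(x)}=\{x\}\cap\overline{A}$.

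Now verify both directions for an arbitrary $B\subseteq A$. If $x\in A$, then $W_{f(x)}=\emptyset\subseteq B$. If $x\notin A$, then $W_{f(x)}=\{x\}$; since $B\subseteq A$ we have $x\notin B$, so $W_{f(x)}\not\subseteq B$. Hence $x\in A\iff W_{f(x)}\subseteq B$, and $A\le_Q B$ via $f$. Notice that $f$ depends only on $A$, not on $B$, and that the only property of $B$ used is $B\subseteq A$.

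For the consequence, take any infinite $\Pi^0_1$ set $A$ and any coinfinite subset $B\subseteq A$ (for instance, every other element of $A$). By the above, $A\le_Q B$, so $A$ is not $Q$-introimmune by the definition in the introduction. There is no real obstacle. The only point needing care is the uniformity in $x$ provided by s-m-n, so that $f$ is total computable even though $W_{f(x)}$ may be empty.
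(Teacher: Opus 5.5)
Your proof is correct and is essentially the paper's argument: use s-m-n to get $W_{f(x)}=\{x\}\cap\overline{A}$, observe that this set is empty for $x\in A$ and is $\{x\}\not\subseteq B$ for $x\notin A$, and then apply this to any $B\subseteq A$ with $\lvert A\setminus B\rvert=\infty$. Your explicit choice of every other element of $A$ does the same job as the paper's choice of $B$ with both $B$ and $A\setminus B$ infinite.
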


\begin{proof}
Since $A$ is $\Pi^0_1$, its complement $\overline{A}$ is c.e. Fix an index $e$ such that
\[
W_e=\overline{A}.
\]
By the $s$-$m$-$n$ theorem, there exists a total computable function $f$ such that, for each $x \in \omega$, the c.e.\ set $W_{f(x)}$ is generated by the program which waits for $x$ to appear in the enumeration of $W_e$ and, if this happens, enumerates $x$. Hence
\[
W_{f(x)}=
\begin{cases}
\varnothing & \text{if } x \in A,\\
\{x\} & \text{if } x \notin A.
\end{cases}
\]

Now let $B \subseteq A$. If $x \in A$, then $W_{f(x)}=\varnothing \subseteq B$. If $x \notin A$, then $W_{f(x)}=\{x\}$, and since $B \subseteq A$, we have $x \notin B$, so $W_{f(x)} \nsubseteq B$. Therefore,
\[
x \in A \iff W_{f(x)} \subseteq B.
\]
Since $f$ is total computable, this proves that $A \le_Q B$.

For the final claim, assume that $A$ is infinite and choose $B \subseteq A$ so that both $B$ and $A \setminus B$ are infinite. Then $A \le_Q B$ by the first part, and hence $A$ is not $Q$-introimmune.
\end{proof}

\begin{theorem}\label{thm:Q_delta2}
There exists a set $A \in \Delta^0_2$ which is $Q$-introimmune. 
\end{theorem}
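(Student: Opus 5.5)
The plan is to build $A$ by a $K$-computable bit-by-bit finite-extension construction, as in Theorems~\ref{thm:bs_delta2} and~\ref{thm:D_delta2}, deciding at stage $s+1$ the membership of one number. The requirements are positive requirements $P_{2e}\colon |A|\ge e$ and negative requirements
\[
N_{2e+1}\colon \text{if } \varphi_e \text{ is total, then there is no } B\subseteq A \text{ with } |A\setminus B|=\infty \text{ and } x\in A\iff W_{\varphi_e(x)}\subseteq B \text{ for all } x,
\]
ordered by priority. The reason $K$ suffices is that the questions the construction asks have the form \virgolette{is $W_{f(x)}\not\subseteq F$?} for a finite set $F$. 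These are $\Sigma^0_1$ and hence $K$-decidable, just as the use bounds $U$ were $K$-decidable in Section~\ref{sec:bs_D}.

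The basic structural observation is the following. If $f=\varphi_e$ $Q$-reduces $A$ to some $B\subseteq A$, then $B\supseteq U_f:=\bigcup_{x\in A}W_{f(x)}$, and in particular $W_{f(x)}\subseteq A$ for every $x\in A$. Hence $N_{2e+1}$ is met in either of two cases:
\begin{enumerate}[label=(\alph*)]
\item there is $x\in A$ with $W_{f(x)}\not\subseteq A$ (a direct diagonalization);
\item almost every element of $A$ lies in $U_f$, since then every admissible $B$ is cofinite in $A$.
\end{enumerate}

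When $N_{2e+1}$ has highest priority among the requirements that want to act at stage $s+1$, I would let it consider the fresh number $a=s$ (the next number to decide) together with a restraint set $R$ of numbers it has promised to keep out of $A$. Using $K$, it asks whether $W_{f(a)}$ contains an element $z\ne a$ outside the finite set $A_s$ of numbers already put into $A$. If so, it puts $a$ into $A$ and restrains $z$ out of $A$ forever, which permanently satisfies the requirement via (a). Otherwise $W_{f(a)}\subseteq A_s\cup\{a\}$. If moreover $a\in W_{f(a)}$, then putting $a$ into $A$ is harmless, because $a$ is then covered by $U_f$, and this is the behaviour that pushes towards (b).

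The verification would be the usual finite-injury induction, as in the proofs of Theorems~\ref{thm:bs_delta2} and~\ref{thm:D_delta2}. One shows that each $P$-requirement acts finitely often. For each $N$-requirement one shows that either some diagonalization of type (a) eventually succeeds and is never injured, since restraints of higher-priority requirements are finite, or all but finitely many elements added to $A$ after its last injury are covered by $U_f$.

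The step I expect to be the main obstacle is the remaining case, in which $a\notin W_{f(a)}$ and $\emptyset\ne W_{f(a)}\subseteq A_s$. If we put $a$ into $A$, then $a$ is uncovered. If we keep $a$ out, a $Q$-reduction only forces $B$ to omit some element of the finite set $W_{f(a)}\subseteq A$, which costs the opponent nothing. In this case I would first try \emph{not} to add $a$, and to add to $A$ only numbers that are covered or diagonalized. A $P$-requirement would then search, via $K$, among fresh candidates $a'$ for one that either lands in some $W_{f(x)}$ with $x\in A_s$ or permits a type-(a) diagonalization. If such a search can fail forever, I would argue that in that case $f$ maps almost every fresh number to a finite subset of the decided part of $A$ not containing it. I would then aim to contradict totality of the reduction by exploiting the infinitely many numbers kept out of $A$: each of them must be witnessed as lying outside $A$ by $B$ omitting an element of the corresponding finite set $W_{f(x)}$, while $B\subseteq A$ must still contain $U_f$. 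Making this counting argument work, possibly with a pigeonhole over the finitely many candidate sets $W_{f(x)}\subseteq A_s$, is where I expect the real difficulty to lie.
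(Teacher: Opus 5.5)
There is a genuine gap. Your setup is sound and matches the paper: the $K$-computable bit-by-bit construction, the observation $B\supseteq\bigcup_{x\in A}W_{f(x)}$, and the direct diagonalization (a). But the proposal stops at the case that carries the proof, $a\notin W_{f(a)}$ with $W_{f(a)}\subseteq A_s$, and the direction you sketch for it does not close.

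\begin{itemize}
\item Constraints $W_{f(x')}\not\subseteq B$ coming from numbers $x'$ kept out of $A$ cost the opponent nothing, since it wants to omit elements from $B$ anyway. They yield a contradiction only if some such $W_{f(x')}$ is covered by the sets $W_{f(x)}$ with $x\in A$, and nothing in your plan arranges this.
\item Keeping all uncovered numbers out is not an option, since $A$ must be infinite.
\item The search for fresh $a'$ lying in some $W_{f(x)}$ with $x\in A_s$ is essentially vacuous. Every $x$ admitted by your own non-diagonalizing branch already satisfies $W_{f(x)}\subseteq[0,x]$, by the very test that admitted it.
\item Your test compares $W_{f(a)}$ only with the current, growing $A_s$. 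So the possible values of $W_{f(a)}$ are not confined to any fixed finite family, and the pigeonhole you mention, over subsets of the current $A_s$, need never produce a repetition.
\end{itemize}

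The missing idea has two parts.

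First, ask the $\Sigma^0_1$ question \emph{globally}, once: is there any $x\ge s$ and any $y\in W_{f(x)}\setminus(A_s\cup\{x\})$? If yes, diagonalize with that pair: put $x$ into $A$ and keep $y$ out, writing the intervening bits one at a time. If no, freeze $D=A_s$ and $L=s$. Now $W_{f(x)}\subseteq D\cup\{x\}$ holds for \emph{every} $x\ge L$, with $D$ a fixed finite set.

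Second, call $x\ge L$ uncovered if $x\notin W_{f(x)}$. For such $x$, the set $W_{f(x)}$ is one of the $2^{|D|}$ subsets $S\subseteq D$. Keeping a number out now does cost the opponent, provided it is paired with an earlier admitted number of the same type. If an uncovered $x'$ has the same type $S$ as some uncovered $x\in A$ with $L\le x<x'$, keep $x'$ out. Then $x\in A$ forces $S\subseteq B$, while $x'\notin A$ forces $S\not\subseteq B$, a contradiction. If no type ever repeats, at most $2^{|D|}$ uncovered numbers above $L$ enter $A$. Hence almost every $x\in A$ satisfies $x\in W_{f(x)}\subseteq B$, so $A\setminus B$ is finite.

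This is exactly the \textbf{C1}/\textbf{C2}/\textbf{C3} (Initial/Waiting) mechanism of the paper's proof. Without it, your argument has no way to handle the case you correctly identified as the obstacle.
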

\begin{proof}
We construct $A \in \Delta^0_2$ by finite extensions, using the halting set $K \equiv_T \emptyset'$ as an oracle. Let $\{V_{e,x}\}_{e,x \in \omega}$ be a standard effective enumeration of all uniformly computably enumerable families of sets. It suffices to satisfy the following requirements for all $e \in \omega$:
\begin{align*}
P_e:\ & \lvert A\rvert \geq e,\\
R_e:\ & \text{The sequence } V_{e,x} \text{ does not } Q\text{-reduce } A \text{ to any } B\subseteq A \text{ with } \lvert A\setminus B\rvert=\infty.
\end{align*}

To prevent lower-priority requirements from jumping over elements and bypassing conditions, the construction proceeds \emph{bit-by-bit}. We use a finite injury priority argument computable in $\emptyset'$. At stage $s$, we define a finite string $\sigma_s \in \{0,1\}^{<\omega}$ of length exactly $s$, letting $A_s = \{z < s : \sigma_s(z)=1\}$ be the finite set of elements already enumerated into $A$. 

Each requirement $R_e$ can be in one of four states: \emph{Initial}, \emph{Working}, \emph{Waiting}, or \emph{Satisfied}. 
When in the \emph{Waiting} state, $R_e$ stores two parameters: a finite base set $D_e$ and a length bound $L_e$.
When in the \emph{Working} state, $R_e$ stores a target finite string $\tau_e \supset \sigma_s$.

\paragraph{Construction.}
\begin{itemize}
    \item \textbf{Stage 0:} $\sigma_0 = \emptyset$. All requirements are in the \emph{Initial} state.
    \item \textbf{Stage $s+1$:} Let $x' = s$. Find the least $e \le s$ such that $R_e$ is not \emph{Satisfied} and requires attention. $R_e$ requires attention if one of the following conditions holds:
    \begin{enumerate}[label=\textbf{C\arabic*}]
        \item $R_e$ is \emph{Initial} and $\exists x \ge x' \exists y \notin A_s \cup \{x\} \, (y \in V_{e,x})$. Since $V_{e,x}$ is c.e.\ and $A_s$ is a known finite set, this is a $\Sigma^0_1$ condition, which can be evaluated using $\emptyset'$.
        \item $R_e$ is \emph{Initial} and \textbf{C1} is false.
        \item $R_e$ is \emph{Waiting} (with parameters $D_e, L_e$). By the failure of \textbf{C1} when $R_e$ entered the \emph{Waiting} state, we know that $V_{e,x'} \subseteq D_e \cup \{x'\}$. Using $\emptyset'$, we can exactly compute the finite set $V_{e,x'}$. Condition \textbf{C3} holds if $x' \notin V_{e,x'}$ and the set $S = V_{e,x'} \cap D_e$ is \virgolette{recorded}. We say $S$ is recorded if there exists some $x \in [L_e, x')$ such that $\sigma_s(x) = 1$, $x \notin V_{e,x}$, and $V_{e,x} \cap D_e = S$.
        \item $R_e$ is \emph{Working} (with target string $\tau_e$). This naturally holds as long as $x' < \lvert\tau_e\rvert$.
    \end{enumerate}
    
    If no such $e$ exists, set $\sigma_{s+1} = \sigma_s \frown 1$ (to implicitly satisfy the positive requirements $P_k$), and proceed to the next stage. Otherwise, let $e$ be the least such index, and act according to the condition satisfied:
    \begin{itemize}
        \item \textbf{Action for C1:} Use $\emptyset'$ to find the lexicographically least pair $(x, y)$ witnessing \textbf{C1}. Let $\tau_e$ be the unique extension of $\sigma_s$ of length $\max(x,y)+1$ such that $\tau_e(x) = 1$, $\tau_e(y) = 0$ (if $y \ge x'$), and $\tau_e(z) = 0$ for all other $z \in [x', \max(x,y)+1) \setminus \{x,y\}$. Set $\sigma_{s+1} = \sigma_s \frown \tau_e(x')$. If $\lvert\sigma_{s+1}\rvert = \lvert\tau_e\rvert$, set $R_e$ to \emph{Satisfied}; otherwise, set $R_e$ to \emph{Working} with target $\tau_e$. Initialize all $R_i$ for $i > e$ (i.e., reset them to \emph{Initial}).
        \item \textbf{Action for C2:} Set $R_e$ to \emph{Waiting} with $D_e = A_s$ and $L_e = x'$. Set $\sigma_{s+1} = \sigma_s \frown 1$, and initialize all $R_i$ for $i > e$.
        \item \textbf{Action for C3:} Set $\sigma_{s+1} = \sigma_s \frown 0$ (so $x' \notin A$). Set $R_e$ to \emph{Satisfied}, and initialize all $R_i$ for $i > e$.
        \item \textbf{Action for C4:} Set $\sigma_{s+1} = \sigma_s \frown \tau_e(x')$. If $\lvert\sigma_{s+1}\rvert = \lvert\tau_e\rvert$, set $R_e$ to \emph{Satisfied}. Initialize all $R_i$ for $i > e$.
    \end{itemize}
\end{itemize}
We set $A = \bigcup_s A_s$. Since $\lvert\sigma_s\rvert = s$ exactly at each stage, the construction is well-defined. All decisions and searches at stage $s+1$ are computable from $\emptyset'$, hence $A \in \Delta^0_2$.

\paragraph{Verification.}
By standard finite injury, each $R_e$ is initialized only finitely often. After its last initialization, $R_e$ starts in \emph{Initial}. It can act via \textbf{C1}, after which it acts finitely many times via \textbf{C4} (advancing bit-by-bit) until it becomes \emph{Satisfied}. Alternatively, it acts via \textbf{C2} (entering \emph{Waiting}). Once in the \emph{Waiting} state, its parameters $D_e, L_e$ are permanently fixed, and it may later act at most once via \textbf{C3}, becoming \emph{Satisfied}.
Thus, every requirement eventually stabilizes. It remains to verify that $A$ is infinite. If infinitely many stages use the default action, then infinitely many $1$'s are appended, so $A$ is infinite. Otherwise, infinitely many stages are devoted to actions of requirements. Since each fixed requirement acts only finitely many times, infinitely many distinct requirements act. For each such requirement, consider its first action after its last initialization. That action is either \textbf{C2}, which immediately writes a $1$, or \textbf{C1}. In the latter case, the chosen target string $\tau_e$ contains some coordinate $x$ with $\tau_e(x)=1$, and since no further initialization of $R_e$ occurs afterwards, that $1$ is eventually written into the construction, either immediately at the \textbf{C1} stage or at a later \textbf{C4} stage when $\tau_e$ is completed. Hence infinitely many $1$'s are written into the characteristic function of $A$, so $A$ is infinite. Therefore every $P_e$ is satisfied.
To see that $R_e$ is met, suppose for a contradiction that the family $(V_{e,x})_{x\in\omega}$ $Q$-reduces $A$ to a coinfinite subset $B \subseteq A$. We analyze the final state of $R_e$:
\begin{itemize}
    \item If $R_e$ is \emph{Satisfied} via \textbf{C1}/\textbf{C4}: The target string $\tau_e$ was fully written. We forced $x \in A$ and $y \notin A$, with $y \in V_{e,x}$. Since $B \subseteq A$, $y \notin B$, so $V_{e,x} \not\subseteq B$. However, $x \in A$, so the $Q$-reduction would require $V_{e,x} \subseteq B$. Contradiction.
    \item If $R_e$ is \emph{Satisfied} via \textbf{C3}: We forced $x' \notin A$, and there is some recorded element $x \in [L_e, x')$ with $x \in A$ such that $V_{e,x} \cap D_e = V_{e,x'} \cap D_e = S$. Since both $x, x' \ge L_e$ and \textbf{C1} globally failed for $R_e$ when it entered \emph{Waiting}, we have $V_{e,x} \subseteq D_e \cup \{x\}$ and $V_{e,x'} \subseteq D_e \cup \{x'\}$. Since $x \notin V_{e,x}$ and $x' \notin V_{e,x'}$, this implies exactly $V_{e,x} = S$ and $V_{e,x'} = S$. Since $x \in A$, the reduction implies $V_{e,x} \subseteq B$, so $S \subseteq B$. But $x' \notin A$, which implies $V_{e,x'} \not\subseteq B$, so $S \not\subseteq B$. Contradiction.
    \item If $R_e$ remains \emph{Waiting} forever: Then \textbf{C3} never triggers. By the failure of \textbf{C1} when $R_e$ entered the \emph{Waiting} state, we have
    \[
    V_{e,x} \subseteq D_e \cup \{x\}
    \qquad\text{for every } x \ge L_e.
    \]
    For every $x>L_e$, the stage deciding whether $x$ enters $A$ evaluates \textbf{C3} for $R_e$; the only exceptional value is $x=L_e$, which was put into $A$ when $R_e$ entered the \emph{Waiting} state via \textbf{C2}. This causes no problem. Indeed, if $x\ge L_e$, $x\in A$, and $x\notin V_{e,x}$, let
    \[
    S_x = V_{e,x}\cap D_e.
    \]
    Since $x\notin V_{e,x}$ and $V_{e,x}\subseteq D_e\cup\{x\}$, we actually have $V_{e,x}=S_x\subseteq D_e$.
    For each fixed subset $S\subseteq D_e$, at most the least $x\ge L_e$ with
    \[
    x\in A,\qquad x\notin V_{e,x},\qquad V_{e,x}\cap D_e=S
    \]
    can belong to $A$: once such an $x$ exists, the set $S$ is recorded. If any later $x'>x$ existed with $x'\not\in V_{e,x'}$ and $V_{e,x'}\cap D_e=S$, it would satisfy condition \textbf{C3}, causing $R_e$ to act and become \emph{Satisfied}. This contradicts the assumption that $R_e$ remains \emph{Waiting} for ever. Thus, no such $x'$ can exist (and in particular, no such $x'$ can belong to $A$).
    Hence there are at most $2^{|D_e|}$ many $x \ge L_e$ such that $x \in A$ and $x \notin V_{e,x}$. Since there are only finitely many $x < L_e$, it follows that only finitely many $x \in A$ satisfy $x \notin V_{e,x}$. Therefore, for almost all $x \in A$, we have $x \in V_{e,x}$.
    By the $Q$-reduction, $x\in A$ implies $V_{e,x}\subseteq B$, and hence $x\in B$ for almost all $x\in A$. This means that $A\setminus B$ is finite, contradicting that $B$ is coinfinite in $A$.\end{itemize}
In all cases, the reduction fails. Hence $A$ is $Q$-introimmune.
\end{proof}

\section[Enumeration reducibility: nonexistence at the Pi11 level and abstract existence]{Enumeration reducibility: nonexistence at the \texorpdfstring{$\Pi^1_1$}{Pi11} level and abstract existence}\label{sec:e}

We now turn to enumeration reducibility \cite{fr59}.
In contrast with the previous reducibilities, the situation for $\le_e$ is not arithmetical: we first show that no infinite $\Pi^1_1$ set is $e$-introimmune, and hence that there are no arithmetical or hyperarithmetical examples.
We then show, however, that $e$-introimmune sets do exist in the unrestricted sense.

Recall that for sets $A,B\subseteq\omega$ one writes $A\le_e B$ if there exists a computably enumerable set $W\subseteq \omega\times [\omega]^{<\omega}$ such that
\[
x\in A \iff (\exists F\subseteq B\text{ finite})\; \langle x,F\rangle\in W.
\]
For an infinite set $X=\{x_0<x_1<\cdots\}$, let $p_X(n)=x_n$ denote its principal function.

The next theorem gives a negative result for $\le_e$ analogous in spirit to the nonexistence theorems of Jockusch and Simpson for Turing introimmunity \cite{jo73,si78}. 

The key inputs are \cite[Proposition~2.3]{ghtpt21}, which characterizes $\Pi^1_1$ sets by uniform c.e. moduli, and \cite[Proposition~2.1]{ghtpt21}, which identifies uniform relative c.e. procedures with uniform enumeration operators.

\begin{theorem}\label{thm:no-pi11-e-introimmune}
Let $A\subseteq\omega$ be infinite and $\Pi^1_1$. Then there exists a set $B\subseteq A$ such that $\lvert A\setminus B\rvert =\infty$ and, moreover, there is a single enumeration operator $\Psi$ satisfying
\[
\Psi(C)=A
\qquad\text{for every infinite } C\subseteq B.
\]
In particular, $A\le_e B$.
\end{theorem}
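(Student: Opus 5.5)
The plan is to reduce the statement to the two cited facts from \cite{ghtpt21}. The first is the characterization of $\Pi^1_1$ sets by uniform c.e.\ moduli (\cite[Proposition~2.3]{ghtpt21}). As I recall it, for our $\Pi^1_1$ set $A$ there is a function $f\colon\omega\to\omega$ and a single oracle procedure $\Theta$ such that $\Theta^g$ enumerates exactly $A$ for every function $g$ with $g\ge f$ pointwise. The second is \cite[Proposition~2.1]{ghtpt21}, which I will use in its concrete form: an axiom of $\Theta$ enumerates some $x$ from a finite initial segment $\sigma$ of the oracle. Hence $x\in A$ whenever $\sigma$ has \emph{some} extension $g\ge f$, and every $x\in A$ is enumerated from some initial segment of every $g\ge f$. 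Without loss of generality I take $f$ strictly increasing.

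\textbf{Step 1: choosing $B$.} Let $A=\{a_0<a_1<\cdots\}$. I choose $B$ sparse enough that $p_B\ge f$ pointwise, with $A\setminus B$ infinite. Concretely, I pick indices $k_0<k_1<\cdots$ recursively (not effectively, which is irrelevant here) with $a_{k_n}\ge f(n)$ and $k_{n+1}\ge k_n+2$, and put $B=\{a_{k_n}:n\in\omega\}$. Then $A\setminus B$ contains every skipped $a_i$, so it is infinite. Moreover, for every infinite $C\subseteq B$ we have $p_C(n)\ge p_B(n)\ge f(n)$ for all $n$, because the $n$-th element of a subset is at least the $n$-th element of the superset.

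\textbf{Step 2: defining $\Psi$.} For a finite set $F=\{c_0<\cdots<c_{k-1}\}$, let $\sigma_F=(c_0,\dots,c_{k-1})$. I put $\langle x,F\rangle$ into the c.e.\ set defining $\Psi$ iff $\Theta^{\sigma_F}$ enumerates $x$ using only the oracle values $\sigma_F$. This is uniformly c.e., so $\Psi$ is an enumeration operator.

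\textbf{Step 3: $\Psi(C)\subseteq A$ (soundness).} Let $C\subseteq B$ be infinite, and let $F\subseteq C$ be finite, listed increasingly as $c_0<\dots<c_{k-1}$. Then $c_i\ge p_C(i)\ge f(i)$, so $\sigma_F$ lies pointwise above $f$. It extends to $g=\sigma_F{}^\frown(f(k),f(k+1),\dots)\ge f$, and $\Theta^g$ enumerates only elements of $A$. So whatever $\sigma_F$ enumerates lies in $A$.

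\textbf{Step 4: $A\subseteq\Psi(C)$ (completeness).} Take $g=p_C\ge f$. Each $x\in A$ is enumerated by $\Theta^{p_C}$ from some initial segment $p_C\upharpoonright k=\sigma_F$, where $F$ consists of the first $k$ elements of $C$. Hence $\Psi(C)=A$ for every infinite $C\subseteq B$. Taking $C=B$ gives $A\le_e B$.

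The main obstacle is Step 3 together with the exact form of the cited propositions. The crucial observation is that positive information about $C$ yields only \emph{upper} bounds on $p_C$, and the soundness argument needs precisely that the oracle data consist of values dominating $f$. I must therefore check that \cite[Proposition~2.3]{ghtpt21} really yields uniformity over all $g\ge f$ (upward closure) rather than for $f$ alone. I also need that the enumeration of a partial string $\sigma$ is sound once $\sigma$ has one extension above $f$; this is what \cite[Proposition~2.1]{ghtpt21} should provide. If the proposition is instead phrased with moduli in the sense of ``every function dominating $f$'', the same argument goes through, because every $\sigma_F$ arising from $C\subseteq B$ is pointwise above $f$.
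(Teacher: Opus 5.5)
Your proof is correct. The first half matches the paper exactly: you invoke \cite[Proposition~2.3]{ghtpt21} to get $f$ and $\Theta$ with $\Theta(g)=A$ for all total $g\ge f$, and you thin $A$ to $B$ with $p_B\ge f$ and $A\setminus B$ infinite. Proposition~2.3 is indeed stated for all $g\ge f$, so your worry there is unfounded. Making $f$ strictly increasing is harmless but unnecessary.

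The routes differ in how the enumeration operator is produced. The paper first transports $\Theta$ to a set-oracle procedure $\widehat{\Theta}$ via the characteristic strings $\tau_\sigma$, which consult negative information about $C$. It checks $\widehat{\Theta}(C)=\Theta(p_C)=A$ for infinite $C\subseteq B$. It then calls \cite[Proposition~2.1]{ghtpt21} as a black box to turn this uniform relative c.e.\ procedure into a genuine enumeration operator.

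You instead build $\Psi$ directly from positive information alone and prove soundness by hand. For finite $F\subseteq C$ one has $p_F(i)\ge p_C(i)\ge f(i)$. So $\sigma_F$ extends to the total function that agrees with $f$ from $\lvert F\rvert$ on, and that function dominates $f$; hence everything $\sigma_F$ enumerates lies in $A$. This is precisely the instance of Proposition~2.1 the paper needs. Your proof of it is particularly simple because the upward-closed modulus condition lets you pad with $f$ itself rather than with a tail of $B$. Your version is thus self-contained and more elementary, while the paper's is more modular.

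One terminological correction. What you describe as the ``concrete form'' of Proposition~2.1 is just the use principle, i.e.\ the axiom set $E$ for $\Theta$, which the paper also writes down. The actual content of Proposition~2.1 is the positive-information conversion that your Steps~2--4 carry out.
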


\begin{proof}
By \cite[Proposition~2.3]{ghtpt21}, there exist a function $f\in\omega^\omega$ and a relative c.e.\ operator $\Theta$ such that
\[
\Theta(g)=A
\qquad\text{for every total function } g\geq f.
\]

Let $p_A$ be the principal function of $A$. Since $p_A$ is strictly increasing and unbounded, we may choose inductively integers
\[
n_0<n_1<n_2<\cdots
\]
such that
\[
p_A(n_s)\geq f(s)
\qquad\text{and}\qquad
n_{s+1}>n_s+1
\]
for every $s\in\omega$. Set
\[
B=\{p_A(n_s):s\in\omega\}.
\]
Then $B\subseteq A$ is infinite and its principal function satisfies
\[
p_B(s)=p_A(n_s)\geq f(s)
\qquad\text{for all } s.
\]
Also, $A\setminus B$ is infinite, because for each $s$ we have $n_s+1<n_{s+1}$, hence
\[
p_A(n_s+1)\in A\setminus B.
\]

We claim that $A$ is uniformly c.e.\ in every infinite subset of $B$. Let $C\subseteq B$ be infinite. Then
\[
p_C\geq p_B\geq f,
\]
so
\[
\Theta(p_C)=A.
\]

To make the uniformity explicit, fix a c.e.\ set of axioms defining $\Theta$, that is, a c.e.\ set
\[
E\subseteq \omega^{<\omega}\times\omega
\]
such that
\[
x\in\Theta(g)\iff (\exists \sigma\prec g)\;(\sigma,x)\in E.
\]
For each strictly increasing finite sequence
\[
\sigma=(u_0<\cdots<u_{k-1})\in\omega^{<\omega}
\]
(with the convention that $\tau_\emptyset=\emptyset$), let $\tau_\sigma\in 2^{<\omega}$ be the binary string of length $u_{k-1}+1$ whose $1$'s occur exactly at the positions $u_0,\dots,u_{k-1}$.
Define
\[
\widehat{E}=\{(\tau_\sigma,x): \sigma \text{ is strictly increasing and } (\sigma,x)\in E\},
\]
and let $\widehat{\Theta}$ be the relative c.e.\ operator on set oracles determined by $\widehat{E}$.
Then, for every infinite set $C\subseteq\omega$,
\[
\widehat{\Theta}(C)=\Theta(p_C),
\]
viewing $C$ as its characteristic function, because for every strictly increasing $\sigma$, one has  $\sigma\prec p_C$ if and only if $\tau_\sigma\prec C$.
Therefore, for every infinite $C\subseteq B$,
\[
\widehat{\Theta}(C)=\Theta(p_C)=A.
\]
So $A$ is uniformly c.e.\ in every infinite subset of $B$.

By \cite[Proposition~2.1]{ghtpt21}, there exists a single enumeration operator $\Psi$ such that
\[
\Psi(C)=A
\qquad\text{for every infinite } C\subseteq B.
\]
In particular, taking $C=B$, we obtain $A\le_e B$.
\end{proof}

\begin{corollary}\label{cor:no-arith-hyp-e-introimmune}
No infinite $\Pi^1_1$ set is $e$-introimmune. Consequently, there are no arithmetical $e$-introimmune sets and no hyperarithmetical $e$-introimmune sets.
\end{corollary}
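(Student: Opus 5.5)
The corollary follows almost immediately from Theorem~\ref{thm:no-pi11-e-introimmune}, and I would organize the argument in two short steps.

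\emph{Step one: the main claim.} Let $A$ be an infinite $\Pi^1_1$ set. Theorem~\ref{thm:no-pi11-e-introimmune} gives a set $B\subseteq A$ with $\lvert A\setminus B\rvert=\infty$ and $A\le_e B$. This $B$ is exactly a witness violating the definition of $e$-introimmunity, which demands $A\not\le_e B$ for every such subset. Hence $A$ is not $e$-introimmune. Finite sets are excluded automatically, since introimmunity is only defined for infinite sets.

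\emph{Step two: the consequences.} I need the standard containments among the definability classes. Every arithmetical set is hyperarithmetical. Every hyperarithmetical set is $\Delta^1_1$, by Kleene's theorem that the hyperarithmetical sets coincide with the $\Delta^1_1$ sets, and is therefore in particular $\Pi^1_1$. So any infinite arithmetical or hyperarithmetical set is an infinite $\Pi^1_1$ set, and step one shows it is not $e$-introimmune. Since an $e$-introimmune set must be infinite by definition, no arithmetical or hyperarithmetical set is $e$-introimmune.

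There is no genuine obstacle; the only external ingredient is the inclusion $\mathrm{HYP}=\Delta^1_1\subseteq\Pi^1_1$, which I would cite as standard.
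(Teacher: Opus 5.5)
Your proposal is correct and follows the paper's own proof: you apply Theorem~\ref{thm:no-pi11-e-introimmune} to get $B\subseteq A$ with $\lvert A\setminus B\rvert=\infty$ and $A\le_e B$, and then use that arithmetical and hyperarithmetical sets are $\Delta^1_1$, hence $\Pi^1_1$. Your remark that finite sets are excluded by definition is a harmless addition.
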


\begin{proof}
Let $A$ be infinite and $\Pi^1_1$. By Theorem~\ref{thm:no-pi11-e-introimmune}, there is a set $B\subseteq A$ such that $\lvert A\setminus B\rvert=\infty$ and $A\le_e B$. Hence $A$ is not $e$-introimmune. The final statement follows because every arithmetical or hyperarithmetical set is $\Delta^1_1$, hence in particular $\Pi^1_1$.
\end{proof}

\begin{remark}
Theorem~\ref{thm:no-pi11-e-introimmune} actually yields more than the failure of $e$-introimmunity: after passing to a suitable coinfinite subset $B\subseteq A$, the set $A$ is uniformly enumeration reducible from \emph{every} infinite subset of $B$.
\end{remark}

Thus $e$-introimmunity disappears already below the $\Pi^1_1$ level.
It is therefore natural to ask whether the notion is vacuous or whether non-effective examples still exist.
The next theorem shows that $e$-introimmune sets do exist, by an application of Soare's general existence theorem \cite{so69} to the countable family of enumeration operators.

\begin{theorem}\label{thm:e-existence}
There exists an $e$-introimmune set.
\end{theorem}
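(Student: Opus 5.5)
Since every set is $\Pi^1_1$ relative to itself, the idea is to defeat each enumeration operator individually. There are only countably many enumeration operators $\Psi_0,\Psi_1,\dots$, each a monotone map $2^\omega\to 2^\omega$ that is Borel (indeed continuous in the positive-information topology, hence Borel measurable for the Cantor topology). I would invoke Soare's abstract existence theorem \cite{so69} in its standard form. For a countable family of Borel measurable partial maps $\Psi_i$, there is an infinite set $A$ such that for every infinite $B\subseteq A$ with $\lvert A\setminus B\rvert=\infty$ and every $i$, $\Psi_i(B)\neq A$. Soare's argument is a Mathias-type forcing combined with the Galvin--Prikry/Ramsey property of Borel sets: at each step one shrinks the infinite reservoir so that all subsets of the reservoir behave homogeneously with respect to a Borel condition.

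So the plan has two steps: (1) verify that the family $\{\Psi_i\}$ satisfies the hypotheses of Soare's theorem; (2) verify that the conclusion yields $e$-introimmunity. For step (2), suppose $A\le_e B$ with $B\subseteq A$ coinfinite in $A$. Then there is a c.e. $W$ with $A=\Psi_W(B)$, where $\Psi_W(X)=\{x:(\exists F\subseteq X\text{ finite})\,\langle x,F\rangle\in W\}$, contradicting the conclusion. For step (1), each $\Psi_W$ is total on $2^\omega$. The preimage of a basic clopen set $\{Y:x\in Y\}$ is the open set $\bigcup_{\langle x,F\rangle\in W}\{X:F\subseteq X\}$, and the preimage of $\{Y:x\notin Y\}$ is its closed complement. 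Hence each $\Psi_W$ is Borel measurable, in fact of Baire class~1.

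The main obstacle is matching exactly the form in which Soare's theorem is stated. Soare's original statement concerns subsets of higher degree and requires, for infinite subsets $B$ of the constructed set, that $A\neq \Psi_i(B)$ while allowing $B$ to range over subsets of a Ramsey-homogeneous reservoir. One must ensure the requirement is "$\Psi_i(B)\ne A$ for $B\subseteq A$ coinfinite", where the target $A$ itself depends on the construction. If the abstract theorem is formulated as producing $A$ with $A\neq \Psi_i(B)$ for all infinite $B\subseteq A$ with $A\setminus B$ infinite, we are done directly.

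Otherwise I would reprove the needed step. Given a Mathias condition $(\sigma,R)$ and an operator $\Psi$, apply Galvin--Prikry to the Borel set $\{X: \Psi(\sigma\cup X_{\text{even}})\text{ contains an element of } X_{\text{odd}}\}$, where $X_{\text{even}}$ and $X_{\text{odd}}$ are the elements of $X$ in even and odd positions of its increasing enumeration. This yields a homogeneous $R'\subseteq R$, and I would take $B$ to be the even-position elements. If homogeneity is positive, we extend so that $A\setminus B$ contains an element that $\Psi(B)$ is forced to contain; since $\Psi(B)\not\subseteq A$ fails to equal $A$, this defeats $\Psi$. If homogeneity is negative, we use that the odd elements, which lie in $A$, are omitted from $\Psi(B)$, which also defeats $\Psi$. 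The delicacy is that $B$ must range over all coinfinite subsets rather than just those of a fixed pattern, and this is exactly what Soare's theorem handles via diagonalizing over all $(\sigma,R)$ extensions.
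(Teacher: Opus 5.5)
Your route is the paper's. You check that each enumeration operator is Borel measurable, and your preimage computation is exactly the paper's; then you apply Soare's theorem. The form you hoped for is available. Soare's theorem gives an infinite $A$ such that $\Psi_i(S)=T$ implies $T\setminus S$ is finite, for all infinite $S,T\subseteq A$. Taking $T=A$ yields your statement, so the Galvin--Prikry fallback is unnecessary. As sketched, it would not suffice anyway, since it only handles subsets of one fixed even/odd pattern.

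The genuine gap is in step (2). Introimmunity quantifies over all $B\subseteq A$ with $\lvert A\setminus B\rvert=\infty$, and this includes finite $B$, e.g.\ $B=\emptyset$. The conclusion you quote from Soare's theorem only concerns infinite $B$. So a reduction $A=\Psi_W(B)$ with $B$ finite contradicts nothing you have stated. Since $A\le_e F$ for finite $F$ holds exactly when $A$ is c.e., you must also show that $A$ is not c.e.

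This follows from the same conclusion, as in the paper. Suppose $A=\Psi_W(B)$ with $B$ finite. Then $A$ is c.e., because there are only finitely many finite subsets of $B$ to search over. Now take the oracle-independent operator with axioms $\langle x,\emptyset\rangle$ for $x\in A$. It satisfies $\Psi(R)=A$ for any infinite $R\subseteq A$ with $A\setminus R$ infinite, which contradicts the conclusion of Soare's theorem. With this case added, your argument is complete.
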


\begin{proof}
Let $\{\Psi_e\}_{e\in\omega}$ be a standard effective enumeration of all enumeration operators, and view each $\Psi_e$ as a total map from $2^\omega$ to $2^\omega$.

We first observe that each $\Psi_e$ is Borel measurable. Fix $e$ and $x$, and let
\[
U_{e,x}=\{X\in 2^\omega : x\in \Psi_e(X)\}.
\]
If $E_e\subseteq \omega\times[\omega]^{<\omega}$ is the c.e.\ set of axioms defining $\Psi_e$, then
\[
U_{e,x}=\bigcup_{\langle x,F\rangle\in E_e}\{X\in 2^\omega : F\subseteq X\},
\]
hence $U_{e,x}$ is open. Therefore, if $[\sigma]\subseteq 2^\omega$ is a basic clopen cylinder, then
\[
\Psi_e^{-1}([\sigma])
=
\bigcap_{\substack{n<|\sigma|\\ \sigma(n)=1}} U_{e,n}
\;\cap\;
\bigcap_{\substack{n<|\sigma|\\ \sigma(n)=0}} (2^\omega\setminus U_{e,n}),
\]
which is Borel. Thus $\Psi_e$ is Borel measurable.

Applying Soare's theorem to the countable family $\{\Psi_e\}_{e\in\omega}$ and to $A=\omega$, we obtain an infinite set $B\subseteq\omega$ such that
\[
(\forall e)(\forall S,T)\bigl[S,T\subseteq B \text{ infinite } \& \Psi_e(S)=T \rightarrow T\setminus S \text{ is finite}\bigr].
\]
We claim that $B$ is $e$-introimmune.

Suppose toward a contradiction that $B\le_e C$ for some $C\subseteq B$ with $\lvert B\setminus C\rvert=\infty$. Let $e$ be such that $\Psi_e(C)=B$.

If $C$ is infinite, then taking $S=C$ and $T=B$ contradicts the displayed property, since $B\setminus C$ is infinite.

So $C$ must be finite. But then $B$ is computably enumerable, because
\[
x\in B \iff (\exists F\subseteq C)\ \langle x,F\rangle\in E_e,
\]
and there are only finitely many subsets of $C$. Choose any infinite coinfinite set $R\subseteq B$.
Since $B$ is c.e., the set of axioms $\{\langle x,\emptyset\rangle:x\in B\}$ is c.e., which defines an enumeration operator which ignores the oracle and simply enumerates $B$; hence $B\le_e R$. Now $R$ is infinite, so taking $S=R$ and $T=B$ again contradicts the displayed property, because $B\setminus R$ is infinite.

This contradiction shows that no such $C$ exists. Therefore $B$ is $e$-introimmune.
\end{proof}

\begin{remark}
Theorem~\ref{thm:e-existence} yields more than the existence of a single $e$-introimmune set.
The set $B$ obtained in the proof has the stronger property that for all infinite $S,T\subseteq B$, if $T\le_e S$, then $T\setminus S$ is finite.
In particular, the same argument shows that every infinite subset of $B$ is itself $e$-introimmune.
Combined with Corollary~\ref{cor:no-arith-hyp-e-introimmune}, this shows that enumeration reducibility admits $e$-introimmune sets only in a genuinely non-effective form: they do exist, but no infinite $\Pi^1_1$ set can be one.
\end{remark}

\section{Conclusions and Future Work}
\label{sec:conclusions}

The aim of this paper has been to clarify how low introimmune sets can occur for several reducibilities arising in computability theory.
For reducibilities strictly below Turing reducibility, we obtained three arithmetical existence results: a $wtt$-introimmune set in $\Pi^0_1$, a $bs$-introimmune set in $\Delta^0_2$, and a $D$-introimmune set in $\Delta^0_2$; consequently, a $\Delta^0_2$ $D^+$-introimmune set exists as well.
The first of these results answers the open question left in \cite{ci18} and shows that $\Pi^0_1$ is the optimal arithmetical level for weak truth-table introimmunity, since $\Sigma^0_1$ sets cannot be immune.

We also analyzed the classical reducibility $\le_Q$, showing that no infinite $\Pi^0_1$ set is $Q$-introimmune while a $\Delta^0_2$ $Q$-introimmune set does exist.
Thus the existence of $\Delta^0_2$ $Q$-introimmune sets is best possible within the arithmetical hierarchy.
Finally, for enumeration reducibility $\le_e$, we obtained a qualitatively different picture: no infinite $\Pi^1_1$ set is $e$-introimmune, so there are no arithmetical, hyperarithmetical, or even $\Pi^1_1$ examples; nevertheless, by applying Soare's abstract theorem to the countable family of enumeration operators, we proved that $e$-introimmune sets do exist.
Thus, in the enumeration case, introimmunity survives only in a genuinely non-effective form.

Taken together, these results considerably enlarge the current existence picture for introimmune sets.
They show that the descriptive complexity of introimmune sets is highly sensitive to the form of oracle access allowed by the reducibility.
For $\le_{wtt}$, the rigid computable use can still be handled inside a purely subtractive $\Pi^0_1$ construction by means of the dynamic spacing technique developed in Section \ref{sec:wtt}.
For $\le_{bs}$ and $\le_D$, by contrast, no computable a priori use bound is available, and the required spacing must be computed with the aid of the halting problem; this is why our present constructions for these reducibilities live in $\Delta^0_2$ rather than in $\Pi^0_1$.
For $\le_Q$, a bit-by-bit finite-extension construction yields a sharp $\Delta^0_2$ existence theorem together with a $\Pi^0_1$ obstruction.
For $\le_e$, the phenomenon is no longer arithmetical at all: the nonexistence theorem is obtained from the descriptive-set-theoretic characterization of $\Pi^1_1$ sets by uniform c.e.\ moduli, whereas abstract existence follows from Soare's theorem for countable families of Borel maps.

A natural next problem is to understand the degree-theoretic distribution of $\Pi^0_1$ $wtt$-introimmune sets.
It is known \cite{ci21} that every non-zero c.e.\ degree computes a $\Pi^0_1$ $m$-introimmune set.
It is therefore natural to ask whether an analogous ubiquity phenomenon holds for $wtt$-introimmunity.
Our present construction suggests serious obstacles to a straightforward permitting argument: the dynamic spacing method relies on preserving carefully prepared gaps, whereas external permissions may arrive at stages that allow an opponent to destroy backup configurations infinitely often.
This leads us to conjecture that, unlike the many-one case, there are non-zero c.e.\ Turing degrees that compute no $\Pi^0_1$ $wtt$-introimmune set.

Another natural direction is to sharpen the results obtained for the other reducibilities studied here.
For $\le_{bs}$ and $\le_D$, the main open question is whether the present $\Delta^0_2$ existence results can be lowered to $\Pi^0_1$.
For $\le_Q$, since the minimal arithmetical existence level has now been identified, it would be interesting to investigate what additional structural restrictions can be imposed on $\Delta^0_2$ $Q$-introimmune sets.
For $\le_e$, where no infinite $\Pi^1_1$ example can exist, a natural problem is to locate the least possible descriptive complexity of $e$-introimmune sets above the $\Pi^1_1$ level.

These problems suggest that introimmunity remains closely connected with the fine structure of reducibilities, and that even small changes in the definition of reducibility may lead to markedly different arithmetical and descriptive existence phenomena.

\section*{Acknowledgments}

This work is the result of an extended human–AI collaboration. Several of the main structural ideas and technical arguments emerged from exploratory interaction with AI-based reasoning systems: Gemini Deep Think (Google DeepMind) and ChatGPT Pro (OpenAI). The author has fully reworked and verified all arguments and bears sole responsibility for the correctness of the results.

\end{document}